\theoremstyle{plain}
\newtheorem{theorem} {Theorem}[section]
\newtheorem{definition}{Definition}[section]
\newtheorem{lemma}[theorem]{Lemma}
\newtheorem{rem}{Remark}[section]
\newcommand{\mat}{\mathbb}
\newcommand{\cal}{\mathcal}
\newcommand{\E}{\mathbb{E}}
\newcommand{\LL}{\mathbb{L}}
\newcommand{\R}{\mathbb{R}}
\newcommand{\K}{\mathbb{K}}
\newcommand{\PP}{\mathbb{P}}
\begin{document}
	\title[S-asymptotically $\omega$-periodic solutions in the $p$-th mean]{
On the $p$-th mean $S$-asymptotically omega periodic solution for
some Stochastic Evolution Equation driven by $\cal{Q}$-Brownian motion}

\maketitle{}

\centerline{Solym Mawaki MANOU-ABI* $^{\rm 1,2}$, William DIMBOUR $^{\rm 3}$}

\vspace{12pt}

\centerline{$^{\rm 1}$ CUFR  de Mayotte}
\centerline{D\'epartement Sciences Technologies, 97660 Dembeni }
\centerline{solym.manou-abi@univ-mayotte.fr }

\vspace{12pt}

\centerline{$^{\rm 2}$ Institut Montpelli\'erain Alexander Grothendieck}
\centerline{UMR CNRS 5149, Universit\'e de Montpellier 2}
\centerline{solym-mawaki.manou-abi@umontpellier.fr}

\vspace{12pt}

\centerline{$^{\rm 3}$ UMR Espace-Dev, Universit\'e de Guyane}
\centerline{Campus de Troubiran 97300}
\centerline{ Cayenne  Guyane (FWI)}
\centerline{william.dimbour@espe-guyane.fr}

\renewcommand{\thefootnote}{}
\footnote{*Corresponding author}
\renewcommand{\thefootnote}{\arabic{footnote}}
\setcounter{footnote}{0}
\date{}
\begin{abstract}
 In this paper, we make a slight contribution about the existence (uniqueness) and asymptotic stability  of the  $p$-th mean $S$-asymptotically $\omega$-periodic solutions for some nonautonomous Stochastic Evolution Equations driven by a $\mathcal{Q}$-Brownian motion. This is done using the Banach fixed point Theorem and a Gronwall inequality.
\end{abstract}
\vspace{12pt}

{\bf AMS Subject Classification}: 34K13 ; 35B10;  60G20.

{\bf Key words and phrases}: $S$-asymptotically periodic solution, composition theorem, evolutionnary process, stochastic evolution equation.

\section{Introduction}
Let $(\Omega,\mathcal{F},\mat{P})$ be a complete probability space and $(\mat{H},||.||)$ a real separable Hilbert space. We are concerned in this paper with the existence and
asymptotic stability of $p$-th mean $S$-asymptotically $\omega$-periodic solution of the following stochastic evolution equation 

\begin{equation}
\label{eqn: eq3050}
\left\{
\begin{array}{l}
dX(t) =  A(t) X(t)dt +f(t,X(t))dt  + g(t,X(t))dW(t), \quad \quad t \geq 0 \\
X(0) =  c_{0},
\end{array}
\right.
\end{equation}
where $(A(t))_{t\ge 0}$ is a familly of densely defined closed linears operators which generates an exponentially stable $\omega$-periodic two-parameter evolutionnary familly. The functions $ f: \R_{+} \times L^{p}(\Omega, \mat{H}) \rightarrow  \LL^{p}(\Omega, \mat{H})$, $  g: \R_{+} \times \LL^{p}(\Omega, \mat{H}) \rightarrow  \LL^{p}(\Omega, L_{2}^{0})$ 
are continuous satisfying some additional conditions and $(W(t))_{t\geq 0}$ is a $\cal{Q}$-Brownian  motion. The spaces  $\LL^{p}(\Omega, \mat{H})$, $L_{2}^{0}$  and the $\cal{Q}$-Brownian  motion are defined in the next section. \\
 
The concept of periodicity is important in probability especially for investigations
on stochastic processes. The interest in such a notion lies in its significance and applications arising in engineering, statistics, etc.  In recent years, there has been an increasing interest in periodic solutions (pseudo-almost periodic, almost periodic, almost automorphic, asymptotically almost periodic, etc) for stochastic evolution equations. For instance amoung others, let us mentioned the existence, uniqueness and asymptotic stability results of almost periodic solutions,  almost automorphic solutions, pseudo almost periodic solutions  studied by many authors, see, e.g. (\cite{bezandry,bez, beza,bezan,bezand,cao,chang,diop,xiliang,sun,zhang}).
The concept of $S$-asymptotically $\omega$-periodic stochastic processes, which is the
central question to be treated in this paper, was first introduced in the literature 
 by Henriquez, Pierri et al in (\cite{henriquez1,henriquez2}). This notion has been devellopped by many authors.\\
In the literature, there has been a significant attention devoted this  concept in the deterministic case; we refer the reader to (\cite{blot,cuevas2009,cuevas2010,cuevas2013,dimbour1,dimbour2}) and the references therein. However, in the random case, there are  few works related to 
the notion of $S$-asymptotically $\omega$-periodicity with regard to the existence, uniqueness and  asymptotic stability for stochastic processes. To our knowledge, the first work dedicated to $S$-asymptotically $\omega$-periodicity for stochastic processes  is due to S. Zhao and M. Song (\cite{zhao,song}) where they show existence of square-mean  $S$-asymptotically $\omega$-periodic solutions for a class of stochastic fractional functional differential equations and for a certain class of stochastic fractional evolution equation driven by Levy noise. But until now and to the best our knowledge,
there is no investigations for the  existence (uniqueness), asymptotic stability of $p$-th mean $S$-asymptotically $\omega$-periodic solutions when $p>2$.\\

This paper is organized as follows. Section 2 deals with some preliminaries intended to clarify the presentation of concepts and norms used latter. We also give a composition result, see Theorem \ref{composition}. In section 3 we present theoretical results on the existence and uniqueness of $S$-asymptotically $\omega$-periodic solution of equation (\ref{eqn: eq3050}), see Theorem \ref{existence}. We also present results on asymptotic stablity of the unique $S$-asymptotically $\omega$-periodic solution of equation(\ref{eqn: eq3050}), see Theorem \ref{stability}.

\section{Preliminaries}
This section is concerned with some notations, definitions, lemmas and preliminary facts which are used
in what follows. 
\subsection{p-th mean S asymptotically omega periodic process}
Assume that the probability space $(\Omega, \mathcal{F},\mathbb{P})$ is equipped with some filtration $(\mathcal{F}_{t})_{t\geq 0}$ satisfying the usual conditions. Let $p\geq 2$. Denote by  $\LL^{p}(\Omega,\mat{H})$  the collection of all strongly measurable $p$-th integrable $\mat{H}$-valued random variables such that
$$ \E ||X|| = \int_{\Omega} ||X(\omega)||^{p}d\PP(\omega) < \infty.  $$



\begin{definition}
A stochastic process $X : \R_+ \rightarrow \LL^{p}(\Omega, \mat{H})$ is said to be continuous whenever 
$$  \lim_{t\rightarrow s} \E || X(t)-X(s)||^{p} = 0.       $$
\end{definition} 

\begin{definition}
A stochastic process $X : \R_+ \rightarrow \LL^{p}(\Omega, \mat{H})$ is said to be bounded if 
there exists a constant $C > 0$ such that $$ \E ||X(t)||^{p} \leq C \quad \forall t \geq 0 $$
\end{definition}

\begin{definition}
A continous  and  bounded stochastic process $X : \R_{+} \rightarrow \LL^{p}(\Omega, \mat{H})$ is said to be $p$-mean $S$-asymptotically $\omega$ periodic if there exists $\omega >0$ such that 
 $$\lim_{t \rightarrow +\infty } \E || X(t+\omega)-X(t)||^{p} = 0, \quad \forall t \geq 0.$$
\end{definition}

The collection of $p$-mean $S$-asymptotically $\omega$-periodic stochastic process with values in $\mat{H}$ is then denoted by $SAP_{\omega}\big(\LL^{p}(\Omega, \mat{H})\big) $.\\

A continuous  bounded stochastic process $X$, which is $2$-mean  $S$-asymptotically 
$\omega$-periodic is also called square-mean $S$-asymptotically $\omega$-periodic.

\begin{rem}
Since any $p$-mean $S$-asymptotically $\omega$-periodic process $X$ is $\LL^{p}(\Omega, \mat{H})$  bounded and continuous, the space $SAP_{\omega}\big(\LL^{p}(\Omega, \mat{H})\big) $ is a Banach space  equipped with the sup norm : 
 $$ ||X||_{\infty} = \sup_{t\geq 0} \Big(\E || X(t) ||^{p}  \Big)^{1/p}.    $$
\end{rem}

 \begin{definition}
A function $F: \R_{+} \times \LL^{p}(\Omega, \mat{H}) \rightarrow  \LL^{p}(\Omega, \mat{H})$ which is jointly continuous, is said to be $p$-mean $S$-asymptotically $\omega$ periodic in $t \in \R_{+}$ uniformly in $X \in K$ where $K \subseteq \LL^{p}(\Omega, \mat{K})$ is bounded if  for any $\epsilon >0$ there exists $L_{\epsilon} >0$ such that 
$$  \E || F(t+\omega,X)-F(t,X)||^{p}\big] \leq \epsilon$$
for all $t \geq L_{\epsilon}$ and all process  $X : \R_{+} \rightarrow K $
\end{definition} 

\begin{definition}
A function $F: \R_{+} \times \LL^{p}(\Omega, \mat{H}) \rightarrow  \LL^{p}(\Omega, \mat{H})$ which is jointly continuous, is said to be $p$-mean asymptotically uniformly continuous on bounded sets $K' \subseteq \LL^{p}(\Omega, \mat{H})$, if for all $\epsilon > 0$ there exists $\delta_{\epsilon} >0$   such that 
$$ \E || F(t,X)-F(t,Y)||^{p} \leq \epsilon$$
for all $t \geq \delta_{\epsilon} $ and every $X, Y \in K'$ with $ \E||X-Y||^{p} \leq \delta_{\epsilon}$.
\end{definition}

\begin{theorem}
\label{composition}
Let $F: \R_{+} \times \LL^{p}(\Omega, \mat{H}) \rightarrow  \LL^{p}(\Omega, \mat{H})$ be a $p$-mean $S$-asymptotically $\omega$ periodic in $t \in \R_{+}$ uniformly in $X \in K$ where $K \subseteq \LL^{p}(\Omega, \mat{H})$ is  bounded and  $p$-mean asymptotically uniformly continuous on bounded sets. Assume that $X : \R_{+} \rightarrow \LL^{p}(\Omega, \mat{H})$ is a $p$-mean $S$ asymptotically $\omega$-periodic process. Then the stochastic process $ (F(t,X(t)))_{t\geq 0}$ is  $p$-mean $S$-asymptotically $\omega$ periodic.
\end{theorem}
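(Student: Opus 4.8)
The plan is to show that $t\mapsto F(t,X(t))$ is continuous, bounded, and satisfies the defining limit $\lim_{t\to\infty}\E\|F(t+\omega,X(t+\omega))-F(t,X(t))\|^p=0$. Continuity follows from the joint continuity of $F$ together with the continuity of $X$ and a standard composition estimate (for fixed $s$, split $\E\|F(t,X(t))-F(s,X(s))\|^p$ using the convexity inequality $\|a+b\|^p\le 2^{p-1}(\|a\|^p+\|b\|^p)$ into a term controlled by continuity of $F$ in the second variable and one controlled by joint continuity in $t$). Boundedness: since $X$ is $p$-mean $S$-asymptotically $\omega$-periodic it is bounded, so $\{X(t):t\ge 0\}$ lies in a bounded set $K\subseteq\LL^p(\Omega,\mat{H})$; I would take $K$ (or its closure) as the bounded set in the hypotheses on $F$, and then for any fixed $t_0$ the continuous function $F$ maps the bounded trajectory into a bounded set — here one should note $F(t,X(t))$ need not a priori be bounded unless we use the uniform structure, so the cleanest route is to absorb boundedness into the main estimate below by comparing with $F(t,X_0)$ for a fixed $X_0\in K$.

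The core of the argument is the decomposition, for $t$ large,
\begin{equation*}
\E\|F(t+\omega,X(t+\omega))-F(t,X(t))\|^p
\le 2^{p-1}\Big(\E\|F(t+\omega,X(t+\omega))-F(t+\omega,X(t))\|^p
+\E\|F(t+\omega,X(t))-F(t,X(t))\|^p\Big).
\end{equation*}
The second term tends to $0$ by the hypothesis that $F$ is $p$-mean $S$-asymptotically $\omega$-periodic in $t$ uniformly for $X$ in the bounded set $K$: given $\epsilon>0$, for $t\ge L_\epsilon$ this term is $\le 2^{p-1}\epsilon$ for every value $X(t)\in K$. For the first term I would invoke $p$-mean asymptotic uniform continuity of $F$ on the bounded set $K$: choose $\delta_\epsilon>0$ as in that definition; since $X\in SAP_\omega(\LL^p(\Omega,\mat{H}))$ we have $\E\|X(t+\omega)-X(t)\|^p\to 0$, so there is $T_\epsilon$ with $\E\|X(t+\omega)-X(t)\|^p\le\delta_\epsilon$ for all $t\ge T_\epsilon$, and then for $t\ge\max(T_\epsilon,\delta_\epsilon)$ the first term is $\le 2^{p-1}\epsilon$. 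Combining, $\E\|F(t+\omega,X(t+\omega))-F(t,X(t))\|^p\le 2^{2p-2}\epsilon$ for all sufficiently large $t$, which gives the claimed limit.

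The main obstacle — and the place where the hypotheses must be used carefully — is the interplay between the two "asymptotic" assumptions on $F$: the $S$-asymptotic $\omega$-periodicity in $t$ holds only for $t\ge L_\epsilon$, and the asymptotic uniform continuity only for $t\ge\delta_\epsilon$ and only for arguments within distance $\delta_\epsilon$; one must check that the bounded set $K$ capturing the range of $X$ is admissible for both, and that $X(t+\omega)$ also lies in (the closure of) $K$, so that uniform continuity applies to the pair $(X(t+\omega),X(t))$. A secondary point is that one genuinely needs the uniform-in-$X\in K$ form of the $S$-asymptotic periodicity of $F$, not just pointwise in $X$, because the argument $X(t)$ varies with $t$; this is exactly why the definition was stated with the bounded set $K$. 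Once these bookkeeping issues are handled, the estimate above closes routinely.
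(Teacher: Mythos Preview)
Your proposal is correct and follows essentially the same route as the paper: the identical splitting
\[
\E\|F(t+\omega,X(t+\omega))-F(t,X(t))\|^p \le 2^{p-1}\big(\E\|F(t+\omega,X(t+\omega))-F(t+\omega,X(t))\|^p+\E\|F(t+\omega,X(t))-F(t,X(t))\|^p\big),
\]
with the first term handled by asymptotic uniform continuity (using $\E\|X(t+\omega)-X(t)\|^p\to 0$) and the second by the uniform $S$-asymptotic $\omega$-periodicity of $F$ on the bounded set $K$ containing the range of $X$. Two minor remarks: your final constant should be $2^{p}\epsilon$ rather than $2^{2p-2}\epsilon$ (the paper simply rescales the intermediate $\epsilon$'s to land exactly on $\epsilon$), and your explicit attention to boundedness and continuity of $t\mapsto F(t,X(t))$ is a point the paper's proof leaves implicit.
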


\begin{proof}
Since $X : \R_{+} \rightarrow \LL^{p}(\Omega, \mat{H})$ is a $p$-mean $S$-asymptotically $\omega$-periodic process, for all $\epsilon >0 $, there exists $T_{\epsilon}>0$ such that for all $t \geq T_{\epsilon}$:
\begin{equation}
\label{eq:etoile1}
    \E || X(t+\omega)-X(t)||^{p} \leq \epsilon.
    \end{equation}
In addition $X$ is bounded that is $$\sup_{t\geq 0} \E || X(t) ||^{p} < \infty.$$
 Let $K \subset \LL^{p}(\Omega, \mat{H}) $ be a bounded set such that $X(t) \in K$ for all $t\geq 0$.\\
We  have :  
\begin{align*}
\E \left|\left| F(t+\omega,X(t+\omega))- F(t, X(t)) \right|\right|^{p}
 & \leq 2^{p-1} \E \left|\left| F(t+\omega,X(t+\omega))- F(t+\omega, X(t)) \right|\right|^{p} \\
 & \quad \quad  + 2^{p-1} \E \left|\left| F(t+\omega,X(t))- F(t, X(t)) \right|\right|^{p}
\end{align*}
Taking into account (\ref{eq:etoile1}) and using the fact that $F$ is $p$-mean asymptotically uniformly
continuous on bounded sets, there exists $\delta_{\epsilon}=\epsilon$ and $L_{\epsilon}= T_{\epsilon}$ such that for all $t\geq T_{\epsilon}$ : 
\begin{equation}
\label{eq:etoile2}
 \E \left|\left| F(t+\omega,X(t+\omega))- F(t+\omega, X(t)) \right|\right|^{p} \leq \frac{\epsilon}{2^{p}}.
\end{equation}
Similarly, using the $p$-mean $S$-asymptotically $\omega$ periodicity in $t \geq 0$ uniformly on bounded sets of $F$ it follows that for all $t\geq T_{\epsilon}$ : 
\begin{equation}
\label{eq:etoile3}
 \E \left|\left| F(t+\omega,X(t))- F(t, X(t)) \right|\right|^{p} \leq \frac{\epsilon}{2^{p}}.
 \end{equation} 
Bringing together the inequalities (\ref{eq:etoile2}) and (\ref{eq:etoile3}), we thus obtain that  for all $ t \geq T_{\epsilon} >0$ 
$$  \E \left|\left| F(t+\omega,X(t+\omega))- F(t, X(t)) \right|\right|^{p} \leq \epsilon        $$
so that the stochastic process $t\rightarrow F(t,X(t))$ is $p$-mean $S$-asymptotically $\omega$-
periodic.
\end{proof}

\begin{lemma}
\label{composition}
Assume that $F: \R_{+} \times \LL^{p}(\Omega, \mat{H}) \rightarrow  \LL^{p}(\Omega, \mat{H})$ is  $p$-mean uniformly $S$-asymptotically $\omega$-periodic in $t \in \R_{+}$ uniformly on bounded sets and 
satisfies the Lipschitz condition, that is, there exists  constant  $L(F)\, > 0$  such that
$$   \E ||F(t,X)-F(t,Y)||^{p} \leq L(F)\, \E||X-Y||^{p} \quad \forall t \geq 0, \, \forall X,Y \in \LL^{p}(\Omega, \K).       $$ 
Let $X$ be an $p$-mean $S$ asymptotically $\omega$-periodic proces, then the process $ (F(t,X(t)))_{t\geq 0}$ is  $p$-mean $S$-asymptotically $\omega$-periodic.
\end{lemma}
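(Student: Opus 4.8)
The plan is to obtain this lemma as a corollary of the composition theorem established above (Theorem~\ref{composition}): the Lipschitz bound is a quantitatively stronger hypothesis than the $p$-mean asymptotic uniform continuity on bounded sets used there, so once this implication is checked the asymptotic estimate is immediate, and it only remains to verify that the composed process is continuous and bounded, hence genuinely an element of $SAP_{\omega}\big(\LL^{p}(\Omega,\mat H)\big)$.

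First I would show that the Lipschitz condition forces $F$ to be $p$-mean asymptotically uniformly continuous on every bounded set. Given $\epsilon>0$, set $\delta_{\epsilon}=\epsilon/L(F)$ (with threshold $0$); then for \emph{all} $t\geq 0$ and all $X,Y$ with $\E\|X-Y\|^{p}\leq\delta_{\epsilon}$ one has $\E\|F(t,X)-F(t,Y)\|^{p}\leq L(F)\,\E\|X-Y\|^{p}\leq\epsilon$. Since $F$ is, by hypothesis, jointly continuous and $p$-mean $S$-asymptotically $\omega$-periodic in $t$ uniformly on bounded sets, all assumptions of Theorem~\ref{composition} are in force. Because $X$ is bounded, say $\E\|X(t)\|^{p}\leq C$ for all $t$, its range lies in the bounded set $K=\{u\in\LL^{p}(\Omega,\mat H):\E\|u\|^{p}\leq C\}$, and applying Theorem~\ref{composition} with this $K$ yields $\lim_{t\to+\infty}\E\|F(t+\omega,X(t+\omega))-F(t,X(t))\|^{p}=0$. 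Equivalently, and this is the computation underlying Theorem~\ref{composition}, one splits for large $t$
\[ \E\|F(t+\omega,X(t+\omega))-F(t,X(t))\|^{p}\leq 2^{p-1}L(F)\,\E\|X(t+\omega)-X(t)\|^{p}+2^{p-1}\E\|F(t+\omega,X(t))-F(t,X(t))\|^{p}, \]
controls the first term by the $S$-asymptotic $\omega$-periodicity of $X$ and the second by that of $F$ on $K$, makes each piece at most $\epsilon/2$ beyond a common threshold, and adds.

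The remaining point, the same one tacitly needed in Theorem~\ref{composition}, is membership in the Banach space, i.e. continuity and boundedness of $t\mapsto F(t,X(t))$. Continuity follows from $\E\|F(t,X(t))-F(s,X(s))\|^{p}\leq 2^{p-1}L(F)\,\E\|X(t)-X(s)\|^{p}+2^{p-1}\E\|F(t,X(s))-F(s,X(s))\|^{p}$, where the first term vanishes as $t\to s$ by continuity of $X$ and the second by joint continuity of $F$. For boundedness, fixing any $u_{0}\in K$ one has $\E\|F(t,X(t))\|^{p}\leq 2^{p-1}L(F)\,\E\|X(t)-u_{0}\|^{p}+2^{p-1}\E\|F(t,u_{0})\|^{p}$, so it suffices that $\sup_{t\geq 0}\E\|F(t,u_{0})\|^{p}<\infty$. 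This is the one place where a small extra input is required, since $S$-asymptotic $\omega$-periodicity of $F(\cdot,u_{0})$ by itself only yields boundedness on compact time intervals; I would dispatch it by invoking the (implicit) standing assumption that $F$ sends bounded subsets of $\LL^{p}(\Omega,\mat H)$ to sets that are bounded uniformly in $t$. I expect this bookkeeping step, not the asymptotic estimate, to be the only real obstacle.
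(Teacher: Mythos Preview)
Your argument is correct. The paper does not actually prove this lemma: after the statement it simply writes ``For the proof, the reader can refer to \cite{song} whenever $p=2$. The case $p>2$ is similar.'' Your route --- observing that the Lipschitz bound trivially implies $p$-mean asymptotic uniform continuity on bounded sets, and then invoking the composition theorem just established --- is the natural reduction, and the explicit splitting you display is precisely the one used in that theorem's proof with $L(F)$ replacing the abstract $\epsilon$--$\delta$ step. So your approach is not genuinely different from what the paper intends; it is just spelled out where the paper punts to a reference.

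One small remark on your choice of $\delta_\epsilon$: in the paper's Definition of $p$-mean asymptotic uniform continuity, the same $\delta_\epsilon$ serves simultaneously as the time threshold and the proximity bound, so you cannot take ``threshold $0$'' separately; but your value $\delta_\epsilon=\epsilon/L(F)$ already works for both roles, so this is only a wording issue.

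Your observation about boundedness of $t\mapsto F(t,X(t))$ is a genuine (minor) gap in the paper's framework: neither the proof of Theorem~\ref{composition} nor the deferred proof here addresses it, and the stated hypotheses do not by themselves guarantee $\sup_{t\ge 0}\E\|F(t,u_0)\|^p<\infty$. In the paper the lemma is only ever applied to $f$ and $g$ under {\bf (H.2)}--{\bf (H.3)}, which are tacitly treated as bounded on bounded sets uniformly in $t$, so your proposed patch --- invoking that implicit standing assumption --- matches how the result is actually used.
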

For the proof, the reader can refer to \cite{song} whenever $p=2$.  The case $p>2$ is similar.\\

Now let us recall the notion of evolutionary family of operators.
\begin{definition}
A two-parameter family of bounded linear operators $ \{ U(t,s) : t \geq s \; \textrm{with}\; t,s \geq 0 \} $
from $\LL^{p}(\Omega, \mat{H}))$ into itself  associate with $A(t)$ is called an evolutionary family of operators whenever the following conditions hold:
\begin{itemize}
\item[(a)] $$  U(t,s) U(s,r) = U(t,r)\quad \textrm{ for }\, \textrm{every} \; r \leq s \leq t ;  $$
\item[(b)] $$  U(t,t) = I , \; \textrm{where}\; I\; \textrm{is the identity operator}\, ; $$
\item[(c)] For all $X\in \LL^{p}(\Omega, \mat{H}))$, the function $(t,s) \rightarrow U(t,s)X$ is continuous for $s<t$ ;
\item[(d)] The function $t \rightarrow U(t,s)$ is differentiable and 
$$  \frac{\partial}{\partial t} (U(t,s)) = A(t) U(t,s)\quad \textrm{ for }\, \textrm{every}\;\; r \leq s \leq t ;  $$
\end{itemize}
\end{definition}
For additional details on evolution families, we refer the reader to the book by Lunardi \cite{lunardi}. 


\subsection{ $\cal{Q}$-Brownian motion and Stochastic integrals}
Let $(B_{n}(t))_{n\geq 1}$, $ t\geq 0$ be a sequence of real valued standard Brownian motion mutulally independent on the filtered space $ (\Omega,\cal{F},\PP,\cal{F}_{t})$. Set 
$$ W(t) = \sum_{n\geq 1} \sqrt{\lambda_{n}} B_{n}(t)\,e_{n},\quad t \geq 0,    $$
where $\lambda_{n} \geq 0$, $n\geq 1$, are non negative real numbers and $(e_{n})_{n\geq 1}$ the complete 
orthonormal basis in  the Hilbert space $(\mathbb{H}, ||.||)$.\\
Let $\mathcal{Q}$ be a symmetric nonnegative operator  with finite trace defined by $$ \mathcal{Q}e_{n}=\lambda_{n}e_{n}\quad \quad \textrm{such}\; \textrm{that}\quad Tr(\mathcal{Q}) =\sum_{n\geq 1} \lambda_{n} < \infty.$$
It is well known that $\mat{E} [ W_{t} ] = 0$ and for all $t\geq s \geq 0$, the distribution of $W(t)-W(s)$ is a Gaussian distribution ($\cal{N}(0,(t-s)\mathcal{Q})$).
The above-mentioned $\mat{H}$-valued stochastic process $(W(t))_{t\geq 0}$ is called an $\mathcal{Q}$-Brownian motion.\\ 
Let $(\mat{K},||.||_{K})$ be a real separable Hilbert space. \\
Let also $\mathcal{L}(\mat{K},\mat{H})$ be the space of all bounded linear operators from $\mat{K}$ into $\mat{H}$. If $\mat{K}=\mat{H}$, we denote it by $\mathcal{L}(\mat{H})$.\\

Set $\mat{H}_{0}= \mathcal{Q}^{1/2}\mat{H}$. The space $\mat{H}_{0}$ is a Hilbert space equipped with the norm $ ||u||_{\mat{H}_{0}}= ||\mathcal{Q}^{1/2}u||$.
Define 
$$ L_{2}^{0} = \{ \Phi \in  \mathcal{L}(\mat{H}_{0},\mat{H}): Tr\big[ (\Phi \cal{Q} \Phi^{*})  \big] < \infty  \}    $$ 
the space of all Hilbert-Schmidt operators from $\mat{H}_{0}$ to $\mat{H}$ equipped with the norm 
$$  ||\Phi||_{L_{2}^{0}} = Tr\big[ (\Phi \cal{Q} \Phi^{*})  \big]=\E||\Phi\mathcal{Q}^{1/2 } ||^{2}$$

In the sequel, to prove Lemma \ref{wedge2} and Theorem \ref{existence} we need the following Lemma that is a particular case of Lemma 2.2 in \cite{seidlerbis} (see also  \cite{prato,seidler}).\\

Assume $T>0$. 
 \begin{lemma}
 \label{itotype}
Let $G: [0,T]\rightarrow \mathcal{L}(\LL^{p}(\Omega,\mat{H}))$ be an 
  $\cal{F}_{t}$-adapted measurable stochastic process satisfying 
$  \int_{0}^{T} \E||G(t)||^{2}dt < \infty \quad \textrm{almost surely}.  $
Then, 
 \begin{itemize}
\item[(i)] the stochastic integral $ \int_{0}^{t} G(s)dW(s)$ is a continuous, square integrable martingale with values in $(\mat{H},||.||)$  such that
  $$ \E \left|\left| \int_{0}^{t} G(s)dW(s)  \right|\right|^{2} \leq \E \int_{0}^{t}  || G(s)||^{2} ds.   $$
 \item[(ii)] There exists some constant $C_{p} >0$ such that the following particular case of  Burkholder-Davis-Gundy inequality holds :
  $$ \E \sup_{0\leq t \leq T}\left|\left| \int_{0}^{t} G(s)dW(s)  \right|\right|^{p} \leq  C_{p}  \E  \left(\int_{0}^{T}|| G(s)||^{2} ds \right)^{p/2}.   $$   
 \end{itemize}
 \end{lemma}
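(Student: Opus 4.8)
The plan is to follow the classical two-step construction of the stochastic integral against a $\mathcal{Q}$-Brownian motion and then to invoke the Hilbert-space Burkholder-Davis-Gundy inequality; indeed the statement is precisely the specialization of \cite[Lemma 2.2]{seidlerbis} mentioned above, so the argument is standard and I only outline it. First I would establish (i) for elementary integrands: write $G(s)=\sum_{k=0}^{N-1}G_{k}\,\mathbf{1}_{(t_{k},t_{k+1}]}(s)$ with $0=t_{0}<\cdots<t_{N}=T$ and each $G_{k}$ an $\mathcal{F}_{t_{k}}$-measurable $L_{2}^{0}$-valued random variable, and set $\int_{0}^{t}G(s)\,dW(s)=\sum_{k}G_{k}\big(W(t_{k+1}\wedge t)-W(t_{k}\wedge t)\big)$. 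Using $\E[W(t)-W(s)\mid\mathcal{F}_{s}]=0$ and the covariance identity $\E\big[\langle W(t)-W(s),x\rangle\langle W(t)-W(s),y\rangle\big]=(t-s)\langle\mathcal{Q}x,y\rangle$, the cross terms vanish and one obtains the It\^o isometry $\E\big\|\int_{0}^{t}G(s)\,dW(s)\big\|^{2}=\E\int_{0}^{t}\|G(s)\|^{2}\,ds$ (the norm here being the $L_{2}^{0}$-norm, $\|G(s)\|^{2}=\mathrm{Tr}(G(s)\mathcal{Q}G(s)^{*})$), together with the martingale property, for elementary $G$.

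Second, for a general adapted $G$ with $\int_{0}^{T}\E\|G(s)\|^{2}\,ds<\infty$ I would approximate it in the norm $\big(\int_{0}^{T}\E\|\cdot\|^{2}\,ds\big)^{1/2}$ by elementary processes $G^{(n)}$; by the isometry the martingales $\int_{0}^{\cdot}G^{(n)}\,dW$ form a Cauchy sequence in that norm, and Doob's $L^{2}$ maximal inequality upgrades this to a Cauchy sequence in $L^{2}\big(\Omega;C([0,T];\mat{H})\big)$, so the limit $\int_{0}^{\cdot}G\,dW$ is a well-defined continuous square-integrable $\mat{H}$-valued martingale inheriting the isometry, whence the inequality in (i). The a.s. (rather than in expectation) integrability hypothesis is then reduced to this case by the usual localization along the stopping times $\tau_{m}=\inf\{t:\int_{0}^{t}\|G(s)\|^{2}\,ds\ge m\}$.

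For (ii), put $M_{t}=\int_{0}^{t}G(s)\,dW(s)$. The previous step also identifies the tensor quadratic variation of $M$ as $\int_{0}^{t}G(s)\mathcal{Q}G(s)^{*}\,ds$, so the scalar quadratic variation of the real submartingale $\|M\|$ is dominated by $\int_{0}^{t}\mathrm{Tr}\big(G(s)\mathcal{Q}G(s)^{*}\big)\,ds=\int_{0}^{t}\|G(s)\|^{2}\,ds$. Applying the Burkholder-Davis-Gundy inequality in the Hilbert space $\mat{H}$ then yields $\E\sup_{0\le t\le T}\|M_{t}\|^{p}\le C_{p}\,\E\big(\int_{0}^{T}\|G(s)\|^{2}\,ds\big)^{p/2}$ with $C_{p}$ depending only on $p$; the precise statement and the case $p>2$ are contained in \cite{seidlerbis,prato,seidler}.

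The main obstacle is the infinite-dimensional Burkholder-Davis-Gundy step: one must ensure that the constant $C_{p}$ does not deteriorate with the dimension --- equivalently, that it is uniform in the truncation level of $W=\sum_{n\ge1}\sqrt{\lambda_{n}}B_{n}e_{n}$ --- which rests on the fact that a Hilbert space is $2$-smooth, and one must carefully justify the identification of the quadratic variation of the $\mat{H}$-valued integral with $\int_{0}^{\cdot}G\mathcal{Q}G^{*}\,ds$ before invoking BDG. The isometry, the continuity of the sample paths and the martingale property are then routine consequences of the elementary-process computation together with Doob's inequality.
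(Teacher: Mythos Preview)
Your proof outline is correct and follows the standard construction; however, the paper does not actually prove this lemma at all. It is stated as a particular case of \cite[Lemma~2.2]{seidlerbis} (with additional references to \cite{prato,seidler}) and used as a black box, so there is nothing in the paper to compare against. You yourself note this at the outset, and what you have written is precisely the classical argument behind those references: It\^o isometry on elementary integrands, density and Doob's maximal inequality to pass to general $G$, localization to handle the a.s.\ integrability hypothesis, and finally the Hilbert-space Burkholder--Davis--Gundy inequality with a dimension-free constant coming from $2$-smoothness. Your identification of the potential subtlety (uniformity of $C_{p}$ in the truncation of $W$ and the correct quadratic-variation formula) is apt, and those points are exactly what the cited works handle. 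In short, your proposal supplies a sketch the paper deliberately omits by citation, and nothing more is required here.
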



In the sequel, we'll frequently make use of the following inequalities :
$$  |a+b|^{p} \leq 2^{p-1} (|a|^{p} + |b|^{p})\quad \; \textrm{for all} \; p\geq 1\; \textrm{and any real numbers} \; a, \, b.  $$

$$  \int_{t_{0}}^{t} e^{-2a(t-s)}ds \leq \int_{t_{0}}^{t} e^{-a(t-s)}ds \leq \frac{1}{a}\quad \forall t \geq t_{0},\; \; \textrm{where} \; a >0. $$
\section{Main results}
In this section, we investigate the existence and the asymptotically stability of the $p$-th mean  $S$-asymptotically $\omega$-periodic solution to the already  defined stochastic differential equation :

\begin{equation*}
dX(t)=A(t) X(t)dt +f(t,X(t))dt + g(t,X(t))dW(t),  \quad \quad 
X(0)= c_{0}
\end{equation*}
where $A(t), t\geq 0$ is a family of densely defined closed linear operators and 
$$   f: \R_{+} \times \LL^{p}(\Omega, \mat{H}) \rightarrow  \LL^{p}(\Omega, \mat{H}),    $$

$$  g: \R_{+} \times \LL^{p}(\Omega, \mat{H}) \rightarrow  \LL^{p}(\Omega, L_{2}^{0})  $$
are jointly continuous satisfying some additional conditions and $(W(t))_{t\geq 0}$ is a $\cal{Q}$-Brownian motion with values in $\mat{H}$ and $\cal{F}_t$-adapted.

 Throughout the rest of this section, we require the following assumption on $U(t,s)$ :\\

{ \bf (H1)}: $A(t)$ generates an exponentially $\omega$-periodic stable evolutionnary process $(U(t,s))_{t\geq s}$ in $\LL^{p}(\Omega, \mat{H})$, that is, a two-parameter family of bounded linear operators with the following additional conditions :\\

\begin{itemize}
	\item[1.] $ U(t+\omega, s+\omega)= U(t,s)$ for all $t\geq s$ ($\omega$-periodicity).
	\item[2.] There exists $M>0$ and $a>0$ such that $||U(t,s)|| \leq Me^{-a(t-s)}$ for $t\geq s.$\\
\end{itemize}

Now, note  that if $A(t)$ generates an evolutionary family $(U(t,s))_{t\geq s}$ on $\LL^{p}(\Omega, \mat{H}))$ then the function $g$ defined by $g(s) = U(t,s)X(s)$ where $X$ is a solution of equation (\ref{eqn: eq3050}), satisfies the following relation
\begin{align*}
	dg(s)
	&= -A(s)U(t,s)X(s) + U(t,s)dX(s) \\
	&=  -A(s)U(t,s)X(s)+ A(s)U(t,s)X(s)ds+ U(t,s)f(s,X(s))ds + U(t,s)g(s,X(s))dW(s).
\end{align*}
Thus
\begin{equation}
\label{eq:aintegrer}
dg(s) = U(t,s)f(s,X(s))ds + U(t,s)g(s,X(s))dW(s).
\end{equation}

Integrating (\ref{eq:aintegrer}) on $[0,t]$ we obtain that
$$ X(t)- U(t,0)c_0 =  \int_{0}^{t} U(t,s)f(s,X(s))ds + \int_{0}^{t} U(t,s)g(s,X(s))dW(s) . $$

Therefore, we define
\begin{definition}
An $(\mathcal{F}_{t})$-adapted stochastic process  $ (X(t))_{t\geq 0 }$ is called a mild solution of (\ref{eqn: eq3050}) if it satisfies the following stochastic integral equation : 
	$$ X(t) = U(t,0)c_{0} + \int_{0}^{t} U(t,s)f(s,X(s))ds + \int_{0}^{t} U(t,s)g(s,X(s))dW(s).$$
\end{definition}

\subsection{The existence of $p$-th mean $S$-asymptotically $\omega$-periodic solution} 
We require the following additional assumptions:\\

{\bf (H.2)} The function $f: \R_{+} \times \LL^{p}(\Omega, \mat{H}) \rightarrow  \LL^{p}(\Omega, \mat{H})$  is  $p$-mean $S$-asymptotically $\omega$ periodic in $t \in \R_{+}$ uniformly in $X \in K$ where $K \subseteq \LL^{p}(\Omega, \mat{H})$ is a bounded set. Moreover the function $f$ satisfies the Lipschitz condition, that is, there exists  constant  $L(f)\, > 0$  such that
$$   \E ||f(t,X)-f(t,Y)||^{p} \leq L(f) \E||X-Y||^{p} \quad \forall t \geq 0, \, \forall X,Y \in \LL^{p}(\Omega, \mat{H}).       $$

{\bf (H.3)}  The function $g: \R_{+} \times \LL^{p}(\Omega, L_{2}^{0}) \rightarrow  \LL^{p}(\Omega, L_{2}^{0})$  is  $p$-mean $S$-asymptotically $\omega$ periodic in $t \in \R_{+}$ uniformly in $X \in K$ where $K \subseteq \LL^{p}(\Omega,L_{2}^{0})$ is a bounded set. Moreover the function $g$  satisfies the Lipschitz condition, that is, there exists  constant  $L(g) \, > 0$  such that
$$   \E ||g(t,X)-g(t,Y)||^{p}_{L_{2}^{0}} \leq L(g) \E||X-Y||^{p} \quad \forall t \geq 0, \, \forall X,Y \in \LL^{p}(\Omega,\mat{H}).       $$ 

\begin{lemma}
\label{wedge1}
We assume that hypothesis {\bf(H.1)} and {\bf(H.2)}  are  satisfied.  We define the nonlinear operator $\wedge_1$ by: for each $\phi \in SAP_{\omega}(\mathbb{L}^{p}(\Omega, \mat{H}))$
	$$(\wedge_1 \phi)(t)=\int_{0}^t U(t,s) f(s,\phi(s))ds.$$
	Then the operator $\wedge_1$ maps $SAP_{\omega}(\mathbb{L}^{p}(\Omega, \mat{H}))$ into itself.
\end{lemma}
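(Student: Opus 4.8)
The goal is to show $\wedge_1$ preserves two properties: $\LL^p$-boundedness/continuity, and the $S$-asymptotic $\omega$-periodicity $\E\|(\wedge_1\phi)(t+\omega)-(\wedge_1\phi)(t)\|^p \to 0$. First I would handle boundedness: given $\phi \in SAP_\omega$, write $f(s,\phi(s)) = (f(s,\phi(s))-f(s,0)) + f(s,0)$, use the Lipschitz bound (H.2) to control the first term by $L(f)\|\phi\|_\infty^p$ and use that $s\mapsto f(s,0)$ is bounded (it is $S$-asymptotically $\omega$-periodic hence bounded by the Remark), so $\E\|f(s,\phi(s))\|^p \le C_f$ uniformly in $s$. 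Then apply the exponential estimate $\|U(t,s)\| \le Me^{-a(t-s)}$ from (H.1): by Hölder (or Jensen) on the probability-normalized measure $a e^{-a(t-s)}ds$,
\[
\E\Big\|\int_0^t U(t,s)f(s,\phi(s))\,ds\Big\|^p \le M^p\Big(\int_0^t e^{-a(t-s)}ds\Big)^{p-1}\int_0^t e^{-a(t-s)}\E\|f(s,\phi(s))\|^p\,ds \le \frac{M^p C_f}{a^p},
\]
using the stated inequality $\int_{t_0}^t e^{-a(t-s)}ds \le 1/a$. Continuity in $t$ follows from standard dominated-convergence arguments on the integral representation.

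\textbf{The $S$-asymptotic periodicity.} This is the main point. By the $\omega$-periodicity of the evolution family, $U(t+\omega,s+\omega)=U(t,s)$, so after the substitution $s\mapsto s+\omega$ in the integral for $(\wedge_1\phi)(t+\omega)$,
\[
(\wedge_1\phi)(t+\omega)=\int_{-\omega}^{t}U(t,s)f(s+\omega,\phi(s+\omega))\,ds.
\]
Hence
\[
(\wedge_1\phi)(t+\omega)-(\wedge_1\phi)(t)=\int_{-\omega}^{0}U(t,s)f(s+\omega,\phi(s+\omega))\,ds+\int_0^t U(t,s)\big[f(s+\omega,\phi(s+\omega))-f(s,\phi(s))\big]\,ds.
\]
For the first integral, $\|U(t,s)\|\le Me^{-a(t-s)}\le Me^{a\omega}e^{-at}$ on $s\in[-\omega,0]$, so (again via Jensen and boundedness of the integrand) its $p$-th mean is $O(e^{-apt})\to 0$. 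For the second integral, split the inner bracket as before into $f(s+\omega,\phi(s+\omega))-f(s+\omega,\phi(s))$ (controlled by $L(f)\E\|\phi(s+\omega)-\phi(s)\|^p$ via Lipschitz) plus $f(s+\omega,\phi(s))-f(s,\phi(s))$ (controlled by the uniform $S$-asymptotic $\omega$-periodicity of $f$ on the bounded set $K\ni\phi(s)$). Both are $\le\varepsilon$ once $s\ge L_\varepsilon$ for a suitable $L_\varepsilon$.

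\textbf{Closing the estimate.} To conclude, fix $\varepsilon>0$, pick $L_\varepsilon$ so the inner bracket has $p$-th mean below $\varepsilon$ for $s\ge L_\varepsilon$, and split $\int_0^t = \int_0^{L_\varepsilon}+\int_{L_\varepsilon}^t$. The tail $\int_{L_\varepsilon}^t$ contributes at most $\frac{M^p}{a^p}\cdot 2^{p-1}\varepsilon$-type terms by the same Jensen/exponential computation. The head $\int_0^{L_\varepsilon}$ has a bounded integrand and an extra factor $e^{-a(t-s)}\le e^{-a(t-L_\varepsilon)}$, so it is $O(e^{-a(t-L_\varepsilon)})\to 0$ as $t\to\infty$; thus for $t$ large enough it too is below $\varepsilon$. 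Combining, $\limsup_{t\to\infty}\E\|(\wedge_1\phi)(t+\omega)-(\wedge_1\phi)(t)\|^p \lesssim \varepsilon$, and since $\varepsilon$ is arbitrary the limit is $0$, so $\wedge_1\phi\in SAP_\omega(\LL^p(\Omega,\mat{H}))$. The main obstacle is bookkeeping the three-way split (boundary term, Lipschitz-in-$\phi$ term, $S$-asymptotic-in-$f$ term) while simultaneously splitting the $s$-integral at $L_\varepsilon$ and letting $t\to\infty$; none of the individual estimates is hard, but the order of quantifiers must be handled carefully so that the constants $M$, $a$, $L(f)$ do not interfere with taking $\varepsilon\to 0$.
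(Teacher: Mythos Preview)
Your proof is correct and follows essentially the same route as the paper's: the same boundary-plus-main-term decomposition of $(\wedge_1\phi)(t+\omega)-(\wedge_1\phi)(t)$, the same H\"older/exponential estimate, and the same splitting of the $s$-integral at a threshold $T_\varepsilon$ with the head killed by the factor $e^{-a(t-s)}$ and the tail by the smallness of the integrand. The only cosmetic difference is that the paper first invokes the composition lemma (Lemma~\ref{composition}) to record that $h(s):=f(s,\phi(s))$ is itself $p$-mean $S$-asymptotically $\omega$-periodic and then works directly with $\E\|h(s+\omega)-h(s)\|^p$, whereas you carry out that composition step inline by splitting $f(s+\omega,\phi(s+\omega))-f(s,\phi(s))$ into its Lipschitz-in-$\phi$ and $S$-asymptotic-in-$f$ pieces.
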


\begin{proof}
We define $h(s)= f(s,\phi(s))$. Since the hypothesis  {\bf(H.2)} is satisfied, using Lemma \ref{composition}, we deduce that the function $h$ is $p$-mean S-asymptotically $\omega$-periodic.\\
Define $F(t)= \int_{0}^t U(t,s) h(s)ds $. It is easy to check that $F$ is bounded and continuous.
Now we have :   
\begin{align*}
F(t+\omega)-F(t)
&= \int_{0}^{\omega}U(t+\omega,s)h(s)ds +
\int_{0}^{t} U(t,s)\big(h(s+\omega)-h(s)\big)ds \\
& = U(t+\omega, \omega) \int_{0}^{\omega}U(\omega,s)h(s)ds+
\int_{0}^{t} U(t,s)\big(h(s+\omega)-h(s)\big)ds
\end{align*}


\begin{align*}
\E || F(t+\omega)-F(t)||^{p}
 & \leq  2^{p-1} M^{2p}e^{-apt} \E \Big(\int_{0}^{\omega} e^{-a(\omega-s)} ||h(s)|| ds  \Big)^{p} \\
 & +  2^{p-1}M^{p} \E \Big(\int_{0}^{t}  e^{-a(t-s)} \left|\left|h(s+\omega)-h(s) \right|\right|ds\Big)^{p}
\end{align*} 
 
Let $p$ and $q$ be conjugate exponents. Using H\"older  inequality, we obtain that 

\begin{align*}
\mathbb{E} || F(t+\omega)-F(t)||^{p} 
& \leq   2^{p-1} M^{2p}e^{-apt}  \Big(\int_{0}^{\omega} e^{-aq(\omega-s)} ds \Big)^{p/q} \int_{0}^{\omega} \mathbb{E}||h(s)||^{p} ds  \\
&  +  2^{p-1}M^{p} \mathbb{E} \Big(\int_{0}^{t}  e^{-a(t-s)} \left|\left|h(s+\omega)-h(s) \right|\right|ds\Big)^{p} \\
& =  I(t) +  J(t)
\end{align*} 
where 
$$ I(t) =  2^{p-1} M^{2p}e^{-apt}  \Big(\int_{0}^{\omega} e^{-aq(\omega-s)}ds \Big)^{p/q} \int_{0}^{\omega} \mathbb{E}||h(s)||^{p} ds    $$
$$ J(t) =  2^{p-1}M^{p}\mathbb{E} \Big(\int_{0}^{t}  e^{-a(t-s)} \left|\left|h(s+\omega)-h(s) \right|\right|ds\Big)^{p}$$
$$ \quad \quad \quad \quad \quad    \quad \quad \quad  =  2^{p-1}M^{p} \mathbb{E} \Big(\int_{0}^{t}  e^{-\frac{a}{q}(t-s)}\times  e^{-\frac{a}{q}(t-s)} \left|\left|h(s+\omega)-h(s) \right|\right|ds\Big)^{p}.  $$

It is obvious that 
$$ \displaystyle{\lim_{t\rightarrow +\infty}}I(t) =0.     $$

Using H\"older  inequality, we obtain that 

 \begin{align*}
J(t)
 & \leq 2^{p-1}M^{p}\Big(\int_{0}^{t} e^{-a(t-s)}ds  \Big)^{p/q} \int_{0}^{t}
   e^{-a(t-s)} \mathbb{E} \left|\left|h(s+\omega)-h(s) \right|\right|^{p}ds\\
   & \leq 2^{p-1}M^{p} \big( \frac{1}{a} \big)^{p/q}\int_{0}^{t}
   e^{-a(t-s)} \mathbb{E} \left|\left|h(s+\omega)-h(s) \right|\right|^{p}ds\\
\end{align*}

Let $\epsilon >0$. Since $ \displaystyle{\lim_{u\rightarrow +\infty}} \mathbb{E} ||h(u+\omega)-h(u)||^{p} = 0 $ : 
\begin{equation}
\label{eq:majoration}
 \exists T_{\epsilon} >0,\, u>T_{\epsilon} \Rightarrow 
\mathbb{E} ||h(u+\omega)-h(u)||^{p} \leq \frac{\epsilon a^{p}}{2^{p-1}M^{p}}. 
\end{equation}

We have 
 \begin{align*}
J(t) 
& \leq 2^{p-1}M^{p} \big( \frac{1}{a} \big)^{p/q} \int_{0}^{T_{\epsilon}}  e^{-a(t-s)} \mathbb{E} \left|\left|h(s+\omega)-h(s) \right|\right|^{p} ds\\
& +  2^{p-1}M^{p}\big( \frac{1}{a} \big)^{p/q} \int_{T_{\epsilon}}^{t}  e^{-a(t-s)} \mathbb{E}\left|\left|h(s+\omega)-h(s) \right|\right|^{p}ds \\
&= J_{1}(t) + J_{2}(t),
\end{align*} 
where 
$$  J_{1}(t) = 2^{p-1}M^{p} \big( \frac{1}{a} \big)^{p/q} \int_{0}^{T_{\epsilon}}  e^{-a(t-s)} \mathbb{E} \left|\left|h(s+\omega)-h(s) \right|\right|^{p} ds$$

$$ J_{2}(t) =  2^{p-1}M^{p} \big( \frac{1}{a} \big)^{p/q} \int_{T_{\epsilon}}^{t}  e^{-a(t-s)} \mathbb{E} \left|\left|h(s+\omega)-h(s) \right|\right|^{p}ds$$

\underline{Estimation of} $J_{1}(t)$.\\

\begin{eqnarray*}
J_{1}(t) & \leq & 2^{p-1}M^{p}\big( \frac{1}{a} \big)^{p/q} \int_{0}^{T_{\epsilon}}  e^{-a(t-s)} \mathbb{E} \left|\left|h(s+\omega)-h(s) \right|\right|^{p} ds \\
 & \leq &  2^{p-1}M^{p}\big( \frac{1}{a} \big)^{p/q} \,2^{p}\, \sup_{t\geq 0} \E ||h(t)||^{p} e^{-at}\int_{0}^{T_{\epsilon}}  e^{aqs} ds .  
\end{eqnarray*} 
It is clear that $   \displaystyle{\lim_{t\rightarrow +\infty}} J_{1}(t) = 0.$\\

\underline{Estimation of} $J_{2}(t)$.\\

Unsing the Inequality in (\ref{eq:majoration}) we have 
\begin{eqnarray*}
J_{2}(t) & = & 2^{p-1}M^{p} \Big( \frac{1}{a} \Big)^{p/q} \int_{T_{\epsilon}}^{t}  e^{-a(t-s)} \mathbb{E} \left|\left|h(s+\omega)-h(s) \right|\right|^{p}ds\\
 & \leq  &  2^{p-1}M^{p} \Big( \frac{1}{a} \Big)^{p/q} \Big(\frac{1}{a}\Big)  \frac{\epsilon a^{p}}{2^{p-1}M^{p}} \\
& = &  2^{p-1}M^{p} a^{-p}\frac{\epsilon a^{p}}{2^{p-1}M^{p}}  \\
& \leq & \epsilon.
\end{eqnarray*} 

\end{proof}
\begin{lemma}
\label{wedge2}
We assume that hypothesis {\bf(H.1)} and  {\bf(H.3)}   are  satisfied.  We define the nonlinear operator $\wedge_2$ by: for each
 $\phi \in SAP_{\omega}(\mathbb{L}^{p}(\Omega, L_{2}^{0}))$
	$$(\wedge_2 \phi)(t)=\int_{0}^t U(t,s) g(s,\phi(s))dW(s).$$
Then the operator $\wedge_2$ maps $SAP_{\omega}(\mathbb{L}^{p}(\Omega, L_{2}^{0}))$ into itself.
\end{lemma}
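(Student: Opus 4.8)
The plan is to mimic the structure of the proof of Lemma~\ref{wedge1}, replacing the ordinary integral by the stochastic integral and the H\"older inequality by the Burkholder--Davis--Gundy inequality from Lemma~\ref{itotype}(ii). First I would set $h(s)=g(s,\phi(s))$ and invoke Lemma~\ref{composition} (applied in $\LL^p(\Omega,L_2^0)$) together with hypothesis {\bf(H.3)} to conclude that $h$ is $p$-mean $S$-asymptotically $\omega$-periodic with values in $L_2^0$, hence in particular bounded: $\sup_{s\ge0}\E\|h(s)\|_{L_2^0}^p<\infty$. Then I would define $F(t)=\int_0^t U(t,s)h(s)\,dW(s)$ and check that it is well defined, continuous in the $p$-th mean (using Lemma~\ref{itotype}), and bounded: applying Lemma~\ref{itotype}(ii) and then H\"older in the time variable against the weight $e^{-a(t-s)}$, one gets $\E\|F(t)\|^p\le C_p M^p\bigl(\tfrac1a\bigr)^{p/2-1}\int_0^t e^{-a(t-s)}\E\|h(s)\|_{L_2^0}^p\,ds\le \text{const}$, so $F\in SAP_\omega$ is at least a candidate.

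Next, the crucial algebraic step: decompose, exactly as in Lemma~\ref{wedge1},
\begin{align*}
F(t+\omega)-F(t)
&=\int_0^{\omega}U(t+\omega,s)h(s)\,dW(s)+\int_0^{t}U(t+\omega,s+\omega)h(s+\omega)\,dW(s+\omega)-\int_0^t U(t,s)h(s)\,dW(s)\\
&=U(t+\omega,\omega)\int_0^{\omega}U(\omega,s)h(s)\,dW(s)+\int_0^{t}U(t,s)\bigl(h(s+\omega)-h(s)\bigr)\,d\widetilde W(s),
\end{align*}
where $\widetilde W(s)=W(s+\omega)-W(\omega)$ is again a $\mathcal Q$-Brownian motion by stationarity of increments and the $\omega$-periodicity $U(t+\omega,s+\omega)=U(t,s)$ from {\bf(H.1)}. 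Using $|a+b|^p\le 2^{p-1}(|a|^p+|b|^p)$ and $\|U(t,s)\|\le Me^{-a(t-s)}$, the first term is bounded by $2^{p-1}M^p e^{-apt}\,\E\bigl\|\int_0^\omega U(\omega,s)h(s)\,dW(s)\bigr\|^p$, which by Lemma~\ref{itotype}(ii) is $\le 2^{p-1}M^p C_p e^{-apt}\bigl(\int_0^\omega M^2 e^{-2a(\omega-s)}\E\|h(s)\|_{L_2^0}^2\,ds\bigr)^{p/2}\to0$ as $t\to+\infty$; call this $I(t)$.

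For the second term $J(t)=2^{p-1}\E\bigl\|\int_0^t U(t,s)(h(s+\omega)-h(s))\,d\widetilde W(s)\bigr\|^p$, I would apply Lemma~\ref{itotype}(ii) to get $J(t)\le 2^{p-1}C_p M^p\,\E\bigl(\int_0^t e^{-2a(t-s)}\|h(s+\omega)-h(s)\|_{L_2^0}^2\,ds\bigr)^{p/2}$, then H\"older in $s$ against $e^{-a(t-s)}$ (viewing the exponent as $e^{-a(t-s)}\cdot e^{-a(t-s)}$ split with conjugate exponents $p/2$ and its dual, as in the paper's $J(t)$ manipulation) to obtain $J(t)\le 2^{p-1}C_p M^p(\tfrac1a)^{p/2-1}\int_0^t e^{-a(t-s)}\,\E\|h(s+\omega)-h(s)\|_{L_2^0}^p\,ds$. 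From here the argument is verbatim the $J_1(t)+J_2(t)$ splitting of Lemma~\ref{wedge1}: fix $\epsilon>0$, choose $T_\epsilon$ so that $\E\|h(u+\omega)-h(u)\|_{L_2^0}^p$ is small for $u\ge T_\epsilon$, bound the tail integral by $\epsilon$ (absorbing the constant $\tfrac1a$) and send $t\to+\infty$ in the finite head integral, which decays like $e^{-at}$. Thus $\limsup_{t\to+\infty}\E\|F(t+\omega)-F(t)\|^p\le\text{const}\cdot\epsilon$, giving the claim.

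The main obstacle, and the only genuinely new point compared with Lemma~\ref{wedge1}, is the time-shift of the stochastic integral: one must justify that $\int_0^t U(t+\omega,s+\omega)h(s+\omega)\,dW(s+\omega)$ can be rewritten as a stochastic integral against the shifted Brownian motion $\widetilde W(s)=W(s+\omega)-W(\omega)$, which has the same law and is adapted to the shifted filtration, so that Lemma~\ref{itotype}(ii) still applies with the \emph{same} constant $C_p$; the rest is a routine transcription of the deterministic estimate with H\"older replaced by BDG. I would state the stationarity-of-increments fact explicitly and otherwise follow the template above.
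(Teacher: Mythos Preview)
Your proof is correct and follows the paper's argument closely: same decomposition of $F(t+\omega)-F(t)$ via the $\omega$-periodicity of $U$, same appeal to BDG (Lemma~\ref{itotype}), and the same $T_\epsilon$-splitting to show the second term tends to zero. The paper differs from you only in two tactical choices: it splits the stochastic integral $\int_0^t$ into $\int_0^{T_\epsilon}+\int_{T_\epsilon}^t$ \emph{before} applying BDG (incurring an extra factor $2^{p-1}$, whence the $4^{p-1}$ in its estimates), and it treats the case $p=2$ separately via the It\^o isometry of Lemma~\ref{itotype}(i) rather than BDG; your order---BDG first, then H\"older, then split the resulting deterministic integral $\int_0^t e^{-a(t-s)}\E\|h(s+\omega)-h(s)\|_{L_2^0}^p\,ds$---is slightly tighter and handles all $p\ge 2$ uniformly. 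You are also more careful than the paper about the Brownian time-shift $\widetilde W(s)=W(s+\omega)-W(\omega)$, which the paper simply writes into the displayed decomposition without comment.
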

\begin{proof}
We define $h(s)= g(s,\phi(s))$. Since the hypothesis  {\bf(H.3)} is satisfied, using Lemma \ref{composition}, we deduce that the function $h$ is $p$- mean S asymptotically $\omega$ periodic.\\
Define $F(t)= \int_{0}^t U(t,s) h(s)ds $. It is easy to check that $F$ is bounded and continuous.
We have :

$$F(t+\omega)-F(t)=\int_{0}^{\omega}U(t+\omega,s)h(s)dW(s) +
\int_{0}^{t} U(t,s)\big(h(s+\omega)-h(s)\big)dW(s) ,$$
$$= U(t+\omega, \omega) \int_{0}^{\omega}U(\omega,s)h(s)dW(s)+
\int_{0}^{t} U(t,s)\big(h(s+\omega)-h(s)\big)dW(s) ,$$

\begin{align*}
\E|| F(t+\omega)-F(t)||^{p} 
& \leq    2^{p-1}M^{p}e^{-apt} \E \left|\left|\int_{0}^{\omega}  U(\omega,s)h(s) dW(s)\right|\right|^{p}\\
& +  2^{p-1}\E  \left|\left| \int_0^t U(t,s)\big(h(s+\omega)-h(s)\big)dW(s) \right|\right|^{p} \\
& : =  I(t) + J(t)
\end{align*} 
where 

$$ I(t) = 2^{p-1} M^{p}e^{-pat}  \E \left|\left|\int_{0}^{\omega}  U(\omega,s)h(s) dW(s)\right|\right|^{p}  $$
$$ J(t)=   2^{p-1} \E  \left|\left|  \int_0^t U(t,s)\big(h(s+\omega)-h(s)\big)dW(s)  \right|\right|^{p} $$



  


It is clear that  $$ \displaystyle{\lim_{t\rightarrow +\infty}} \mathbb{E}I(t) =0.$$

 Let $\epsilon >0$. Since $ \displaystyle{\lim_{t\rightarrow +\infty}} \mathbb{E} ||h(t+\omega)-h(t)||^{p} = 0  $ 
\begin{equation}
\label{eq:majorationbis}
\exists T_{\epsilon} >0,\, t>T_{\epsilon} \Rightarrow \E  ||h(s+\omega)-h(s)||^{p}_{L^{2}_{0}} \leq  \frac{ \epsilon\, (2a)^{p/2}}{4^{p-1}M^{p}C_{p}},  
   \end{equation}
   where the constant $C_p$ will be precised in the next lines.\\
We have 
 \begin{align*}
\E J(t)
 & =     2^{p-1} \E \left|\left|  \int_0^t U(t,s)\big(h(s+\omega)-h(s)\big)dW(s)  \right|\right|^{p}  \\
&\leq  4^{p-1}  \E \left|\left|  \int_{0}^{T_{\epsilon}} U(t,s)\big(h(s+\omega)-h(s)\big)dW(s)  \right|\right|^{p} \\
& +   4^{p-1} \E \left|\left|  \int_{T_{\epsilon}}^{t} U(t,s)\big(h(s+\omega)-h(s)\big)dW(s)  \right|\right|^{p}\\
&: = \E J_{1}(t) + \E J_{2}(t),
\end{align*} 
where 
$$  J_{1}(t) =  4^{p-1} \left|\left|  \int_{0}^{T_{\epsilon}} U(t,s)\big(h(s+\omega)-h(s)\big)dW(s)  \right|\right|^{p} $$
$$ J_{2}(t) =   4^{p-1}  \left|\left|  \int_{T_{\epsilon}}^{t} U(t,s)\big(h(s+\omega)-h(s)\big)dW(s)  \right|\right|^{p}$$
\bigskip

Note that for all $t\geq 0$, $h(t+\omega)-h(t) \in \mathbb{L}^{p}(\Omega, L_{2}^{0}) \subseteq \mathbb{L}^{2}(\Omega, L_{2}^{0}) $ and  

\begin{align*}
\int_{0}^{t} \E||U(t,s)(h(s+\omega)-h(s))||^{2}ds &  \leq M^{2}\int_{0}^{t} e^{-2a(t-s)} \E||h(s+\omega)-h(s)||^{2}_{L_{2}^{0}}ds \\
& \leq 4M^{2}\sup_{t\geq 0} \E||h(t)||^{2}_{L_{2}^{0}} \int_{0}^{t}e^{-2a(t-s)} ds \\
& \leq  4M^{2}a^{-2} \sup_{t\geq 0} \E||h(t)||^{2}_{L_{2}^{0}}\\
& < \infty.
\end{align*}

\underline{Estimation of} $\E J_{1}(t)$.\\

Assume that $p>2$. Using H\"older inequality between
conjugate exponents  $\frac{p}{p-2}$ and $\frac{p}{2}$ together with Lemma \ref{itotype}, part (ii), there exists 
constant $C_p$ such that : 

\begin{align*}
\E J_{1}(t) 
& \leq  C_{p}4^{p-1}M^{p} \E \left[ \int_{0}^{T_{\epsilon}} e^{-2a(t-s)} ||h(s+\omega)-h(s)||^{2}_{L_{2}^{0}} ds  \right]^{p/2} \\
 & \leq   C_{p}4^{p-1}M^{p} \Big( \int_{0}^{T_{\epsilon}} e^{-\frac{2ap(t-s)}{p-2}}  ds \Big)^{\frac{p-2}{2}} \, \int_{0}^{T_{\epsilon}}  \E ||h(s+\omega)-h(s)||^{p}_{L_{2}^{0}} ds \\
  & \leq   C_{p}4^{p-1}M^{p}\, e^{-apt}\,\Big( \int_{0}^{T_{\epsilon}} e^{\frac{2aps}{p-2}} ds \Big)^{\frac{p-2}{2}}\,T_{\epsilon}2^{p}\,\sup_{s\geq 0} \E ||h(s)||^{p}_{L_{2}^{0}}.
\end{align*} 

Therefore 
 $$ \displaystyle{\lim_{t\rightarrow +\infty}} \mathbb{E} J_{1}(t) =0.$$
 
Assume that $p=2$. By Lemma \ref{itotype}, part (i) we get :
 \begin{align*}
\E J_{1}(t)
 & \leq  4M^{2} \E \left[  \int_{0}^{T_{\epsilon}} e^{-a(t-s)} ||h(s+\omega)-h(s)||_{L_{2}^{0}} dB(s)  \right]^{2}  \\
 & \leq   4M^{2} \E \left[ \int_{0}^{T_{\epsilon}} e^{-2a(t-s)} ||h(s+\omega)-h(s)||^{2}_{L_{2}^{0}} ds \right]\\
  & \leq  16M^{2} e^{-2at}\, \sup_{s\geq 0} \E|| h(s)||^{2}_{L_{2}^{0}} \int_{0}^{T_{\epsilon}} e^{2as}ds\\
    & \leq  16M^{2} e^{-2at} \, \sup_{s\geq 0} \E|| h(s)||^{2}_{L_{2}^{0}}\,\int_{0}^{T_{\epsilon}} e^{2as}ds
\end{align*} 
Thus 
 $$ \displaystyle{\lim_{t\rightarrow +\infty}} \mathbb{E} J_{1}(t) =0.$$

\underline{Estimation of} $\E J_{2}(t)$. \\

Assume that $p>2$.  Using again  Lemma \ref{itotype}, part (ii), H\"older inequality between  between
conjugate exponents  $\frac{p}{p-2}$ and $\frac{p}{2}$ and the inequality in (\ref{eq:majorationbis}) we have

 \begin{align*}
\E J_{2}(t) 
 & =   4^{p-1} \E \left|\left|  \int_{T_{\epsilon}}^{t} U(t,s)\big(h(s+\omega)-h(s)\big)dW(s)  \right|\right|^{p}  \\
& \leq   4^{p-1}M^{p} C_{p}  \E \left[  \int_{T_{\epsilon}}^{t} e^{-2a(t-s)} ||h(s+\omega)-h(s)||^{2}_{L_{2}^{0}} ds  \right]^{p/2} \\
& =  4^{p-1}M^{p} C_{p}  \E \left[  \int_{T_{\epsilon}}^{t} e^{-2a(t-s)\frac{p-2}{p}}\times e^{-2a(t-s)\frac{2}{p}}  ||h(s+\omega)-h(s)||^{2}_{L_{2}^{0}} ds  \right]^{p/2} \\
&\leq   C_{p}4^{p-1}M^{p} \Big(\int_{T_{\epsilon}}^{t}  e^{-2a(t-s)}  ds \Big)^{\frac{p-2}{2}} \,   \int_{T_{\epsilon}}^{t} e^{-2a(t-s)} \E  ||h(s+\omega)-h(s)||^{p}_{L_{2}^{0}} ds \\
&\leq  \frac{ C_{p}4^{p-1}M^{p}\epsilon (2a)^{p/2}}{C_{p}4^{p-1}M^{p}}\Big(\int_{T_{\epsilon}}^{t}  e^{-2a(t-s)}  ds \Big)^{\frac{p}{2}}\\
 & \leq \epsilon.
\end{align*} 

We conclude that $$  \displaystyle{\lim_{t\rightarrow +\infty}} \E J_{2}(t) = 0.$$

Assume that $p=2$.  By Lemma \ref{itotype}, part (i)  and Cauchy-Schwarz inequality we have 
\begin{align*}
\E J_{2}(t)  
& =  4 \E \left|\left|  \int_{T_{\epsilon}}^{t} U(t,s)\big(h(s+\omega)-h(s)\big)dW(s)  \right|\right|^{2}  \\
& \leq    4M^{2}  \E \left[  \int_{T_{\epsilon}}^{t} e^{-2a(t-s)} ||h(s+\omega)-h(s)||^{2}_{L_{2}^{0}} ds  \right] \\
& =  4M^{2}   \left[  \int_{T_{\epsilon}}^{t} e^{-a(t-s)}\times e^{-a(t-s)} \E ||h(s+\omega)-h(s)||^{2}_{L_{2}^{0}} ds  \right]  \\
&\leq  4M^{2} \Big( \int_{T_{\epsilon}}^{t} e^{-2a(t-s)}ds\Big)^{1/2} 
 \Big( \int_{T_{\epsilon}}^{t} e^{-2a(t-s)} \big(\E ||h(s+\omega)-h(s)||^{2}_{L_{2}^{0}}\big)^{2} ds
 \Big)^{1/2}
\end{align*} 
Note also that for $t \geq T_{\epsilon}$ : 
$$ \E  ||h(s+\omega)-h(s)||^{2}_{\mat{L}_{2}^{0}} \leq \frac{\epsilon\,a}{2M^{2}}     $$
so that 
\begin{align*}
\E J_{2}(t) 
&\leq  \frac{4M^{2}\epsilon a}{2M^{2}} \Big(\int_{T_{\epsilon}}^{t} e^{-2a(t-s)}ds\Big) \\
 & \leq  \epsilon.
\end{align*}
This implies that $$\displaystyle{\lim_{t\rightarrow +\infty}} \E J_{2}(t) = 0. $$
Finally, we conclude that 
$$   \displaystyle{\lim_{t\rightarrow +\infty}} \mathbb{E} || F(t+\omega)-F(t)||^{p} = 0           $$
\end{proof}
	
\begin{theorem}
\label{existence}
 We assume that hypothesis {\bf(H.1)}, {\bf(H.2)} and {\bf(H.3)}   are  satisfied and 
\begin{itemize}
\item[(i)] 
\begin{equation}
\label{eq:existence1}
\Theta   = 2^{p-1}M^{p}\big( L(f) a^{-p} + C_{p}L(g) a^{\frac{-p}{2}} \big) < 1  \quad \quad \textrm{if}\quad  p>2  
  \end{equation}
\item[(ii)] 
\begin{equation}
\label{eq:existence2}
  \Xi  = 2M^{2} \big( L(f) \frac{1}{a^{2}} + L(g) \frac{1}{a} \big) < 1 \quad \quad \textrm{if}\quad  p=2.     
    \end{equation} 
\end{itemize}
Then the stochastic evolution equation (\ref{eqn: eq3050}) has a unique $p$-mean $S$-asymptoticaly $\omega$-periodic solution.
\end{theorem}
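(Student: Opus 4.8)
The plan is to apply the Banach fixed point theorem to the nonlinear operator
$$ (\wedge \phi)(t) = U(t,0)c_0 + (\wedge_1\phi)(t) + (\wedge_2\phi)(t) = U(t,0)c_0 + \int_0^t U(t,s)f(s,\phi(s))\,ds + \int_0^t U(t,s)g(s,\phi(s))\,dW(s) $$
on the Banach space $SAP_{\omega}(\LL^p(\Omega,\mat{H}))$ equipped with the sup norm $||\cdot||_\infty$. The first step is to check that $\wedge$ maps this space into itself. The term $t\mapsto U(t,0)c_0$ is continuous, and by {\bf(H.1)}(2) satisfies $\E||U(t,0)c_0||^p\le M^p e^{-apt}\E||c_0||^p\to 0$, so it is bounded and trivially $S$-asymptotically $\omega$-periodic; the two integral terms are handled by Lemma \ref{wedge1} and Lemma \ref{wedge2} respectively (for $g$ one uses the embedding $\LL^p(\Omega,L_2^0)\subseteq\LL^2(\Omega,L_2^0)$ noted in the proof of Lemma \ref{wedge2} to legitimize the stochastic integral). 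Hence $\wedge$ is well defined on $SAP_{\omega}(\LL^p(\Omega,\mat{H}))$.

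The second and main step is the contraction estimate. Given $\phi,\psi\in SAP_{\omega}(\LL^p(\Omega,\mat{H}))$, write
$$ (\wedge\phi)(t)-(\wedge\psi)(t) = \int_0^t U(t,s)\big(f(s,\phi(s))-f(s,\psi(s))\big)ds + \int_0^t U(t,s)\big(g(s,\phi(s))-g(s,\psi(s))\big)dW(s), $$
apply the inequality $|a+b|^p\le 2^{p-1}(|a|^p+|b|^p)$, and estimate the two pieces separately. For the deterministic integral, use {\bf(H.1)}(2), then H\"older's inequality with conjugate exponents $p,q$ exactly as in Lemma \ref{wedge1} to get
$$ \E\Big\|\int_0^t U(t,s)\big(f(s,\phi(s))-f(s,\psi(s))\big)ds\Big\|^p \le M^p\Big(\tfrac1a\Big)^{p/q}\int_0^t e^{-a(t-s)}\E\|f(s,\phi(s))-f(s,\psi(s))\|^p ds, $$
then invoke the Lipschitz bound {\bf(H.2)}, bound $\E\|\phi(s)-\psi(s)\|^p\le\|\phi-\psi\|_\infty^p$, and use $\int_0^t e^{-a(t-s)}ds\le 1/a$ together with $p/q=p-1$ to obtain the factor $M^p a^{-p}L(f)$. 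For the stochastic integral, split into the cases $p>2$ and $p=2$: when $p>2$ apply the Burkholder--Davis--Gundy inequality of Lemma \ref{itotype}(ii) followed by H\"older with conjugate exponents $\frac{p}{p-2},\frac{p}{2}$ (the same manipulation as in the estimation of $\E J_2(t)$ in Lemma \ref{wedge2}), use {\bf(H.3)}, and collect the factor $C_p M^p a^{-p/2}L(g)$; when $p=2$ use Lemma \ref{itotype}(i) directly to get the factor $M^2 a^{-1}L(g)$. Adding the two contributions and taking the supremum over $t\ge 0$ yields
$$ \|\wedge\phi-\wedge\psi\|_\infty^p \le \Theta\,\|\phi-\psi\|_\infty^p \quad (p>2), \qquad \|\wedge\phi-\wedge\psi\|_\infty^2 \le \Xi\,\|\phi-\psi\|_\infty^2 \quad (p=2), $$
where $\Theta$ and $\Xi$ are precisely the constants in \eqref{eq:existence1} and \eqref{eq:existence2}. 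By hypothesis $\Theta<1$ (resp.\ $\Xi<1$), so $\wedge$ is a contraction on the Banach space $SAP_{\omega}(\LL^p(\Omega,\mat{H}))$.

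The final step is immediate: by the Banach fixed point theorem $\wedge$ has a unique fixed point $X\in SAP_{\omega}(\LL^p(\Omega,\mat{H}))$, and by the definition of mild solution and the derivation of the integral equation preceding the statement, this fixed point is exactly the unique $p$-mean $S$-asymptotically $\omega$-periodic mild solution of \eqref{eqn: eq3050}. I expect the main obstacle to be bookkeeping in the contraction estimate for the stochastic term: one must carefully match the powers of $M$, the constant $C_p$, the exponents from the two successive H\"older steps, and the time-integral bounds so that the resulting constant is exactly $\Theta$ (resp.\ $\Xi$); the case distinction $p>2$ versus $p=2$ is what forces the two different forms of the smallness condition, since BDG with the $\frac{p}{p-2},\frac{p}{2}$ splitting is unavailable at $p=2$ and one falls back on the It\^o isometry of Lemma \ref{itotype}(i).
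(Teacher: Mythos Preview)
Your proposal is correct and follows essentially the same route as the paper: define the mild-solution operator, show it maps $SAP_{\omega}(\LL^p(\Omega,\mat{H}))$ into itself via Lemmas \ref{wedge1} and \ref{wedge2}, derive the contraction bound using H\"older plus Lemma \ref{itotype}(ii) for $p>2$ and the It\^o isometry of Lemma \ref{itotype}(i) for $p=2$, and conclude by Banach's fixed point theorem. The only cosmetic difference is that for the term $U(t,0)c_0$ you argue it tends to $0$ in $\LL^p$ (hence is trivially $S$-asymptotically $\omega$-periodic), whereas the paper directly estimates $\E\|U(t+\omega,0)-U(t,0)\|^p$; both arguments are valid.
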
	
	
\begin{proof}
We define the nonlinear operator $\Gamma$  by the expression 

	$$  (\Gamma \Phi )(t) =  U(t,0) c_{0} +  \int_{0}^t U(t,s) f(s,\Phi(s))ds +  	\int_{0}^{t} U(t,s) g(s,\Phi(s))dW(s)             $$
Note that 	$$     (\Gamma \Phi )(t) =  U(t,0) c_{0} + (\wedge_{1}\Phi)(t) + (\wedge_{2}\Phi)(t)              $$
According to the hypothesis {\bf (H1)} we have :
\begin{eqnarray}
\E ||U(t+\omega,0)-U(t,0)||^{p}  & \leq & 2^{p-1} \big( \E ||U(t+\omega,0||^{p}+ \E ||U(t,0)||^{p}\big) \\
                        & \leq & 2^{p-1} M^{p}e^{-ap(t+\omega)} + M^{p}e^{-ap t} \\
                         & = & 2^{p-1} M^{p}e^{-apt}\big( e^{-ap\omega} + 1\big)
\end{eqnarray}
Therefore $$   \displaystyle{\lim_{t\rightarrow +\infty}} \E ||U(t+\omega,0)-U(t,0)||^{p}= 0.    $$
According to Lemma \ref{wedge1} and Lemma \ref{wedge2}, the operators $\wedge_1$ and $\wedge_2$ maps the space of
$p$-mean S-asymptotically $\omega$-periodic solutions into itself. Thus $\Gamma$ maps the space of $p$-mean S-asymptotically $\omega$ periodic solutions into itself. We have 
\begin{align*}
\E || \Gamma\Phi(t)-\Gamma \Psi(t)||^{p}  
&\leq  2^{p-1} \E \left(\int_{0}^{t} ||U(t,s)||\, ||f(s,\Phi(s))-f(s,\Psi(s))||ds \right)^{p}\\
& + 2^{p-1} \E \left( \int_{0}^{t} ||U(t,s)||\, ||g(s,\Phi(s))-g(s,\Psi(s))||dW(s) \right)^{p}\\
 & \leq   2^{p-1} M^{p} \E \left(\int_{0}^{t} e^{-a(t-s)}\, ||f(s,\Phi(s))-f(s,\Psi(s))||ds \right)^{p} \\
 &+ 2^{p-1}M^{p} \E \left( \int_{0}^{t} e^{-a(t-s)}\, ||g(s,\Phi(s))-g(s,\Psi(s))||dW(s) \right)^{p}
\end{align*}	

Case $p>2$ : By Lemma \ref{itotype}, part (ii) and H\"older inequality  we have

\begin{align*}
\E || \Gamma\Phi(t)-\Gamma \Psi(t)||^{p} 
 &\leq  2^{p-1}M^{p} L(f)\big(\int_{0}^{t} e^{-a(t-s)} ds \big)^{p-1}  \int_{0}^{t}e^{-a(t-s)} \E  ||\Phi(s)-\Psi(s)||^{p}ds \\
 & + 2^{p-1}M^{p}C_{p} \E \Big( \int_{0}^{t} e^{-2a(t-s)}\, ||g(s,\Phi(s))-g(s,\Psi(s))||_{L^{0}_{2}}^{2}ds \Big)^{p/2} \\
 &\leq  2^{p-1}M^{p} L(f) \sup_{s\geq 0} \E  ||\Phi(s)-\Psi(s)||^{p} \Big(\int_{0}^{t} e^{-a(t-s)} ds \Big)^{p} \\
 & +  2^{p-1}M^{p}C_{p} \E \Big( \int_{0}^{t} e^{-a(t-s)}\, ||g(s,\Phi(s))-g(s,\Psi(s))||_{L^{0}_{2}}^{2}ds \Big)^{p/2} \\
&\leq  2^{p-1}M^{p}a^{-p} L(f) ||\Phi - \Psi ||_{\infty}^{p}  +  \\
& 2^{p-1}M^{p}C_{p}\E \Big( \int_{0}^{t} e^{-\frac{ap(t-s)}{p-2}} e^{-\frac{a(t-s)}{p}} ||g(s,\Phi(s))-g(s,\Psi(s))||_{L^{0}_{2}}^{2}ds \Big)^{p/2} \\
&\leq  2^{p-1}M^{p}a^{-p} L(f) ||\Phi - \Psi ||_{\infty}^{p}\\
&  + 2^{p-1}M^{p}C_{p}\Big( \int_{0}^{t}e^{-a(t-s)}ds\Big)^{\frac{p-2}{2}} \times \\
&  \int_{0}^{t} e^{-a(t-s)} \E||g(s,\Phi(s))-g(s,\Psi(s))||_{L^{0}_{2}}^{p}ds\\
 & \leq  2^{p-1}M^{p}\big( L(f) a^{-p} +C_{p}L(g) a^{-\frac{p}{2}} \big)  ||\Phi - \Psi ||_{\infty}^{p} 
\end{align*}
This implies that 
\[ || \Gamma \Phi - \Gamma \Psi ||_{\infty}^{p} \leq  2^{p-1}M^{p}\big( L(f) a^{-p} +C_{p}L(g) a^{-\frac{p}{2}} \big)  ||\Phi - \Psi ||_{\infty}^{p}. \]
Consequently, if $\Theta < 1$, then $\Gamma$ is a contraction mapping. One completes the proof by the Banach fixed-point principle.\\

Case $p=2$ : using Cauchy-Schwarz inequality and Lemma \ref{itotype},  part (i), we obtain
\begin{align*}
\E || \Gamma\Phi(t)-\Gamma \Psi(t)||^{2}
  &\leq  2M^{2} \big( \int_{0}^{t} e^{-a(t-s)} ds \big)\, \E \int_{0}^{t}
e^{-a(t-s)} ||f(s,\Phi(s))-f(s,\Psi(s))||^{2}ds \\
& + 2M^{2} \E \int_{0}^{t} e^{-2a(t-s)}\, ||g(s,\Phi(s))-g(s,\Psi(s))||^{2}_{L_{2}^{0}}ds \\
 &\leq 2M^{2}L(f)\sup_{s\geq 0} \E || \Phi(s)-\Psi(s)||^{2}  \big( \int_{0}^{t} e^{-a(t-s)} ds \big)^{2}\\
 & + 2M^{2} L(g)\sup_{s\geq 0} \E || \Phi(s)-\Psi(s)||^{2}  \int_{0}^{t} e^{-2a(t-s)}ds\\
 & \leq  2M^{2} \big(L(f) \frac{1}{a^{2}} + L(g) \frac{1}{a}\big) \sup_{s\geq 0} \E || \Phi(s)-\Psi(s)||^{2}  
\end{align*}
	
This implies that 
\[ || \Gamma \Phi - \Gamma \Psi ||_{\infty}^{2} \leq 2M^{2} \big(L(f)\frac{1}{a^{2}} + L(g)\frac{1}{a}\big) 
|| \Phi - \Psi ||_{\infty}^{2}.  \]

Consequently, if $\Xi < 1$, then $\Gamma$ is a contraction mapping. One completes the proof by the Banach fixed-point principle. \\

\end{proof}

\subsection{Stability of $p$-mean S asymptotically $\omega$ periodic solution} 
In the previous section, for the non linear SDE, we obtain that it has a unique $p$-mean
 S-asymptotically $\omega$-periodic solution under some conditions.
  In this section, we will show that the unique $p$ mean  S asymptotically $\omega$ periodic solution is asymptotically stable in the $p$ mean sense.\\
Recall that 
\begin{definition}
The unique  $p$-mean S asymptotically $\omega$ periodic solution $X^{*}(t)$ of 
 (\ref{eqn: eq3050}) is said to be stable in $p$-mean sense if for any $\epsilon >0$, there exists 
 $\delta > 0$ such that 
 \[    \mathbb{E} || X(t)-X^{*}(t) ||^{p} < \epsilon, \quad \quad t \geq 0,        \]
 whenever $\mathbb{E} || X(0)-X^{*}(0) ||^{p} <  \delta $, where $X(t)$ stands for a solution of
 (\ref{eqn: eq3050}) with initial value $X(0)$.
 \end{definition}
 
 \begin{definition}
 The unique  $p$-mean S asymptotically $\omega$ periodic solution $X^{*}(t)$ is said to be asymptotically
 stable in $p$-mean sense if it is stable in $p$-mean sense  and 
 \[  \lim_{t \rightarrow \infty} \mathbb{E}||  X(t)-X^{*}(t)||^{p} = 0.  \]
 \end{definition}
 
 The following Gronwall inequality is proved to be useful in our asymptotical stability analysis.
 
 \begin{lemma}
 \label{gronwall}
 Let $u(t)$ be  a non negative continuous functions for $t\geq 0$, and $\alpha,\gamma$ be some 
 positive constants. If 
 $$      u(t) \leq \alpha e^{-\beta t} + \gamma \int_{0}^{t} e^{-\beta (t-s)}u(s)ds, \quad t \geq 0, $$
 then $$  u(t) \leq \alpha \exp{ \{ ( -\beta + \gamma ) t \}}.    $$
 \end{lemma}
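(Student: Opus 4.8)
The plan is to reduce the integral inequality to a differential inequality by the standard substitution that removes the exponential weight. First I would set $v(t) = e^{\beta t} u(t)$. Multiplying the hypothesis through by $e^{\beta t}$ gives
\[
v(t) \;=\; e^{\beta t} u(t) \;\leq\; \alpha \;+\; \gamma \int_{0}^{t} e^{\beta s} u(s)\,ds \;=\; \alpha \;+\; \gamma \int_{0}^{t} v(s)\,ds,
\]
so $v$ satisfies a classical (weight-free) Gronwall inequality with constant $\alpha$ and multiplier $\gamma$. Applying the elementary Gronwall lemma then yields $v(t) \leq \alpha e^{\gamma t}$, and multiplying back by $e^{-\beta t}$ gives exactly $u(t) \leq \alpha \exp\{(-\beta+\gamma)t\}$, which is the desired conclusion.

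To keep the argument self-contained I would prove the elementary step directly rather than citing it. Define $R(t) = \alpha + \gamma \int_{0}^{t} v(s)\,ds$, which is $C^{1}$, strictly positive (since $\alpha>0$), and satisfies $v(t)\le R(t)$ as well as $R'(t) = \gamma v(t) \le \gamma R(t)$. Hence $\frac{d}{dt}\log R(t) = R'(t)/R(t) \le \gamma$, and integrating from $0$ to $t$ with $R(0)=\alpha$ gives $\log R(t) - \log\alpha \le \gamma t$, i.e. $R(t)\le \alpha e^{\gamma t}$. Combining with $v(t)\le R(t)$ finishes the estimate for $v$, and unwinding the substitution finishes the proof.

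There is no serious obstacle here; the only point requiring a little care is the positivity of $R(t)$, which is what legitimizes dividing by $R(t)$ and passing to the logarithm — this is exactly where the hypothesis $\alpha>0$ (and the non-negativity and continuity of $u$, hence of $v$) is used. If one wanted to allow $\alpha=0$ one would instead argue with $R_{\varepsilon}(t)=\varepsilon+\gamma\int_0^t v(s)\,ds$ and let $\varepsilon\downarrow 0$, but since the statement assumes $\alpha$ positive this refinement is unnecessary.
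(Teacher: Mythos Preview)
Your argument is correct. The paper itself does not prove Lemma~\ref{gronwall}; it merely states the inequality as a tool for the stability analysis, so there is nothing in the paper to compare your approach against. Your substitution $v(t)=e^{\beta t}u(t)$ followed by the standard differential-inequality argument via $R(t)=\alpha+\gamma\int_0^t v(s)\,ds$ is the textbook proof and is fully self-contained; the positivity remark for $R$ is apt and uses the hypothesis $\alpha>0$ exactly where it is needed. (As a side note, the paper's statement lists $\alpha,\gamma$ as positive constants but does not explicitly constrain $\beta$; your proof goes through for any real $\beta$.)
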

 
 \begin{theorem}
 \label{stability}
 Suppose that hypothesis {\bf(H.1)}, {\bf(H.2)} and {\bf(H.3)}  are  satisfied and assume that 
 \begin{itemize}
 \item[(i)]
 \begin{equation}
 \label{eq:stabilityp}
3^{p-1} M^{p} \big( L(f) a^{1-p} +  L(g)C_{p}\, a^{\frac{2-p}{2}}  \big) < a 
 \end{equation}
 whenever $p>2$.
 \item[(ii)]
 \begin{equation}
 \label{eq:stability2}
   3M^{2}\big(L(f)a^{-1} + L(g)  \big) < a 
\end{equation}
 whenever $p=2$.
 \end{itemize}
 Then the $p$-mean $S$-asymptotically solution $X_{t}^{*}$ of (\ref{eqn: eq3050}) is asymptotically stable in the $p$-mean sense.
 \end{theorem}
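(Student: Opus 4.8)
The plan is to compare the $S$-asymptotically $\omega$-periodic solution $X^{*}$ furnished by Theorem \ref{existence} with an arbitrary mild solution $X$ of (\ref{eqn: eq3050}) with initial value $X(0)$, and to reduce the problem to the Gronwall inequality of Lemma \ref{gronwall}. Writing both solutions through the mild representation and subtracting gives
\begin{align*}
X(t)-X^{*}(t) &= U(t,0)\big(X(0)-X^{*}(0)\big) \\
&\quad + \int_{0}^{t} U(t,s)\big(f(s,X(s))-f(s,X^{*}(s))\big)ds \\
&\quad + \int_{0}^{t} U(t,s)\big(g(s,X(s))-g(s,X^{*}(s))\big)dW(s).
\end{align*}
Put $u(t)=\E\|X(t)-X^{*}(t)\|^{p}$, which one checks to be a nonnegative, finite and continuous function of $t$ (finiteness and continuity coming from the Lipschitz bounds in {\bf(H.2)}, {\bf(H.3)} and the boundedness of the coefficients along solutions). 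Using $|a+b+c|^{p}\le 3^{p-1}(|a|^{p}+|b|^{p}+|c|^{p})$ splits $u(t)$ into three pieces, which I would estimate exactly as in the proofs of Lemmas \ref{wedge1}, \ref{wedge2} and Theorem \ref{existence}.

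First, {\bf(H.1)} gives $\E\|U(t,0)(X(0)-X^{*}(0))\|^{p}\le M^{p}e^{-apt}\E\|X(0)-X^{*}(0)\|^{p}\le M^{p}e^{-at}\E\|X(0)-X^{*}(0)\|^{p}$. Second, for the drift term, H\"older's inequality (Cauchy--Schwarz when $p=2$) applied to $\int_{0}^{t} M e^{-a(t-s)}\|f(s,X(s))-f(s,X^{*}(s))\|ds$, together with the Lipschitz constant $L(f)$ from {\bf(H.2)}, produces a bound $M^{p}a^{1-p}L(f)\int_{0}^{t}e^{-a(t-s)}u(s)ds$ (respectively $M^{2}a^{-1}L(f)\int_{0}^{t}e^{-a(t-s)}u(s)ds$). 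Third, for the diffusion term with $p>2$ I would apply the Burkholder--Davis--Gundy estimate of Lemma \ref{itotype}(ii), then a H\"older split between the conjugate exponents $\frac{p}{p-2}$ and $\frac{p}{2}$, then {\bf(H.3)} and $e^{-2a(t-s)}\le e^{-a(t-s)}$, to get $C_{p}M^{p}a^{(2-p)/2}L(g)\int_{0}^{t}e^{-a(t-s)}u(s)ds$; when $p=2$ the It\^o isometry of Lemma \ref{itotype}(i) together with {\bf(H.3)} gives $M^{2}L(g)\int_{0}^{t}e^{-a(t-s)}u(s)ds$. Collecting the three estimates yields
\[
u(t)\le \alpha e^{-at}+\gamma\int_{0}^{t}e^{-a(t-s)}u(s)ds,\qquad t\ge 0,
\]
with $\alpha=3^{p-1}M^{p}\E\|X(0)-X^{*}(0)\|^{p}$ and $\gamma=3^{p-1}M^{p}\big(L(f)a^{1-p}+C_{p}L(g)a^{(2-p)/2}\big)$ when $p>2$, and with $\alpha=3M^{2}\E\|X(0)-X^{*}(0)\|^{2}$, $\gamma=3M^{2}\big(L(f)a^{-1}+L(g)\big)$ when $p=2$.

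Finally I would invoke Lemma \ref{gronwall} with $\beta=a$ to obtain $u(t)\le \alpha\exp\{(\gamma-a)t\}$ for all $t\ge 0$. Hypothesis (i) (for $p>2$) and hypothesis (ii) (for $p=2$) are precisely $\gamma<a$, hence $\gamma-a<0$ and $\lim_{t\to\infty}u(t)=0$, which is the asymptotic part of the conclusion. For stability in the $p$-mean sense, since $\gamma-a<0$ the factor $\exp\{(\gamma-a)t\}$ is at most $1$ for every $t\ge 0$, so $u(t)\le \alpha=c_{p}\E\|X(0)-X^{*}(0)\|^{p}$ with $c_{p}=3^{p-1}M^{p}$ (or $3M^{2}$ when $p=2$); given $\epsilon>0$ one sets $\delta=\epsilon/c_{p}$, and then $\E\|X(0)-X^{*}(0)\|^{p}<\delta$ forces $u(t)<\epsilon$ for all $t\ge 0$. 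The one genuinely delicate step is the diffusion estimate for $p>2$: one has to run the BDG inequality of Lemma \ref{itotype}(ii) through the correct H\"older split so that the resulting bound is linear in $u(s)$ against an $e^{-a(t-s)}$ kernel --- it is exactly this that makes Lemma \ref{gronwall} applicable and that dictates the precise shape of conditions (\ref{eq:stabilityp})--(\ref{eq:stability2}).
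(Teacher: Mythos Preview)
Your proposal is correct and follows essentially the same approach as the paper: the same mild-solution decomposition, the same $3^{p-1}$ split, the same H\"older/BDG estimates on the drift and diffusion terms (with the identical exponent split $\frac{p}{p-2},\frac{p}{2}$ for $p>2$ and It\^o isometry for $p=2$), and the same appeal to Lemma \ref{gronwall} with $\beta=a$. If anything, your write-up is slightly more complete, since you make the reduction $e^{-apt}\le e^{-at}$ explicit and you separately verify the stability part of Definition 3.2, whereas the paper only records the exponential decay and leaves stability implicit.
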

 \begin{rem}
Note that the above conditions (\ref{eq:stabilityp}) respectively (\ref{eq:stability2}) implies
conditions (\ref{eq:existence1}) respectively (\ref{eq:existence2}) of Theorem \ref{existence}.
 \end{rem}

 \begin{proof}
 \begin{align*}
 \mathbb{E} || X(t)-X^{*}(t) ||^{p} & =  \E \Big|\Big| U(t,0)(X(0)-X^{*}(0)) \\
 &\quad \quad + \int_{0}^{t} U(t,s)\big(f(s,X(s))-f(s,X^{*}(s))\big) ds \\
  & \quad \quad + \int_{0}^{t} U(t,s) \big( g(s,X(s))-g(s,X^{*}(s)) \big) dW(s) \Big|\Big|^{p}  
\end{align*}

Assume that $p>2$. Using H\"older inequality  we have
 
\begin{align*}
 \mathbb{E} || X(t)-X^{*}(t) ||^{p} 
 & \leq  3^{p-1}M^{p}e^{-apt}\E || X(0)-X^{*}(0)||^{p}  \\
 & + 3^{p-1} \E \Big(\int_{0}^{t} ||U(t,s)||\, ||f(s,X(s))-f(s,X^{*}(s))|| ds \Big)^{p} \\
 & + 3^{p-1} \E \Big(\int_{0}^{t} ||U(t,s)||\, ||g(s,X(s))-g(s,X^{*}(s))|| dW(s) \Big)^{p} \\
 &\leq    3^{p-1}M^{p}e^{-apt}\E || X(0)-X^{*}(0)||^{p} \\
 & + 3^{p-1}M^{p} \E \Big(\int_{0}^{t} e^{-a(t-s)}\, ||f(s,X(s))-f(s,X^{*}(s))|| ds \Big)^{p}\\
 & +  3^{p-1}M^{p}  \E \Big(\int_{0}^{t} e^{-a(t-s)}\, ||g(s,X(s))-g(s,X^{*}(s))|| dW(s) \Big)^{p} \\
 &= 3^{p-1}M^{p}e^{-apt}\E || X(0)-X^{*}(0)||^{p} + 3^{p-1}M^{p} \times \Big( \\
  & \E \Big(\int_{0}^{t} e^{-\frac{a(p-1)(t-s)}{p}}  e^{-\frac{a(t-s)}{p}}||f(s,X(s))-f(s,X^{*}(s))|| ds \Big)^p \Big) \\
& + 3^{p-1}M^{p} C_{p} \times \Big( \\
& \Big(\int_{0}^{t} e^{-\frac{2a(p-2)(t-s)}{p}}e^{-\frac{4a(t-s)}{p}}\, \E ||g(s,X(s))-g(s,X^{*}(s))||^{2}_{L^{0}_{2}} ds \Big)^{p/2}\Big) \\
  &\leq    3^{p-1}M^{p}e^{-apt}\E || X(0)-X^{*}(0)||^{p}  \\
  & + 3^{p-1}M^{p}L(f)\Big( \int_{0}^{t}e^{-a(t-s)}ds \Big)^{p-1}\times \Big( \\
  &  \int_{0}^{t} e^{-a(t-s)}\E||X(s)-X^{*}(s)||^{p} ds \Big) \\
 &  + 3^{p-1} M^{p} C_{p} \Big(\int_{0}^{t} e^{-2a(t-s)} \Big)^{\frac{p-2}{2}} \times \Big( \\
 & \int_{0}^{t}
 e^{-2a(t-s)}\E \, ||g(s,X(s))-g(s,X^{*}(s))||^{p}_{L_{2}^{0}} ds \Big)
\end{align*} 
so that 
\begin{align*}
 \mathbb{E} || X(t)-X^{*}(t) ||^{p} 
 & \leq 3^{p-1}M^{p}e^{-apt}\E || X(0)-X^{*}(0)||^{p} \\
  & + 3^{p-1}M^{p}L(f) (\frac{1}{a})^{p-1} \int_{0}^{t} e^{-a(t-s)}\E||X(s)-X^{*}(s)||^{p} ds \\
 & + 3^{p-1} M^{p} C_{p} L(g) (\frac{1}{a})^{\frac{p-2}{2}} \int_{0}^{t} e^{-a(t-s)}\E \, ||X(s)-X^{*}(s)||^{p} ds.
\end{align*} 

Using Lemma \ref{gronwall}, we obtain : 

\begin{align*} 
\mathbb{E} || X(t)-X^{*}(t) ||^{p} &  \leq 3^{p-1}M^{p} \times \E || X(0)-X^{*}(0)||^{p} \times \\
& \exp{\left\{\Big(-a + 3^{p-1} M^{p} \big( L(f) a^{1-p} +  L(g) C_{p}\,a^{\frac{2-p}{2}}  \big) \Big)t \right\}}.
\end{align*}

Straightforwardly, we obtain that $X(t)$ converges to $0$ exponentially fast if $$  -a +   3^{p-1} M^{p} \big( L(f) (\frac{1}{a})^{p-1} + L(g) C_{p}\,a^{\frac{2-p}{2}} \big) < 0, $$
which is equivalent to our condition (\ref{eq:stabilityp}). Therefore $X^{*}$ is asymptotically stable in the $p$-mean sense.\\

Assume that $p=2$. We have 
 
\begin{align*}
 \mathbb{E} || X(t)-X^{*}(t) ||^{2} 
 & \leq  3M^{2}e^{-at}\E || X(0)-X^{*}(0)||^{2} \\
 & \quad  +3 \E \Big(\int_{0}^{t} ||U(t,s)||\, ||f(s,X(s))-f(s,X^{*}(s))|| ds \Big)^{2} \\
 & \quad + 3  \E \Big(\int_{0}^{t} ||U(t,s)||\, ||g(s,X(s))-g(s,X^{*}(s))|| dW(s) \Big)^{2} \\
 &\leq   3M^{2}e^{-at}\E || X(0)-X^{*}(0)||^{2}  \\
 &+ 3M^{2} \E \Big(\int_{0}^{t} e^{-a(t-s)}\, ||f(s,X(s))-f(s,X^{*}(s))|| ds \Big)^{2} \\
 &+ 3M^{2}  \E \Big(\int_{0}^{t} e^{-a(t-s)}\, ||g(s,X(s))-g(s,X^{*}(s))|| dW(s) \Big)^{2} 
\end{align*} 
Then using Cauchy-Schwartz inequality   and Lemma 2.3 part (i), we have 
\begin{align*}
\mathbb{E} || X(t)-X^{*}(t)||^{2}
 &\leq  3M^{2}e^{-at}\E || X(0)-X^{*}(0)||^{2} \\
  & + 3M^{2} \int_{0}^{t} e^{-a(t-s)}ds \int_{0}^{t} e^{-a(t-s)} \E ||f(s,X(s))-f(s,X^{*}(s))||^{2} ds \\
 & + 3M^{2}  \int_{0}^{t} e^{-2a(t-s)}\, \E||g(s,X(s))-g(s,X^{*}(s))||^{2}_{L_{2}^{0}} ds, \\
 &\leq   3M^{2}e^{-at}\E || X(0)-X^{*}(0)||^{2} \\
 & + 3M^{2}L(f)a^{-1} \int_{0}^{t} e^{-a(t-s)} \E ||X(s)-X^{*}(s)||^{2} ds \\
 & + 3M^{2}L(g)\int_{0}^{t} e^{-a(t-s)}\, \E|| X(s)-X^{*}(s)||^{2} ds \\
\end{align*} 
Thus 
\begin{align*}
\mathbb{E} || X(t)-X^{*}(t) ||^{2} 
&\leq   3M^{2}e^{-at}\E || X(0)-X^{*}(0)||^{2}\\
& +  \Big( \frac{3M^{2}L(f)}{a} + 3M^{2}L(g)\Big) \int_{0}^{t} e^{-a(t-s)} \E ||X(s)-X^{*}(s)||^{2} ds 
\end{align*} 
By Lemma \ref{gronwall} we have 
$$   \mathbb{E} || X(t)-X^{*}(t) ||^{2} \leq 3M^{2} \E || X(0)-X^{*}(0)||^{2} \exp{\left\{\big( -a + 3M^{2}(\frac{L(f)}{a} + L(g) ) \big)\,t \right\}}          $$

Therefore  $\mathbb{E} || X(t)-X^{*}(t) ||^{2}$ converges to $0$ exponentially fast whenever condition (\ref{eq:stability2}) holds. In particular the unique $S$-asymptotically $\omega$-periodic solution is asymptotically stable in square mean sense.\\

  \end{proof}

 \end{document}